\theoremstyle{plain}
\newtheorem{theorem}{Theorem}[section]
\newtheorem{proposition}[theorem]{Proposition}
\newtheorem{lemma}[theorem]{Lemma}
\theoremstyle{definition}
\newtheorem{remark}[theorem]{Remark}
\newcommand{\f}{\varphi}
\newcommand{\CC}{\mathbb C}
\newcommand{\PP}{\mathbb P}
\newcommand{\BB}{\mathbf B}
\newcommand{\DD}{\mathbf D}
\newcommand{\EE}{\mathbf E}
\newcommand{\GG}{\mathbf G}
\newcommand{\MM}{\mathbf M}
\newcommand{\RR}{\mathbf R}
\newcommand{\WW}{\mathbf W}
\newcommand{\E}{{\mathcal E}}
\newcommand{\F}{{\mathcal F}}
\newcommand{\G}{{\mathcal G}}
\newcommand{\I}{{\mathcal I}}
\def\O{\mathcal O}
\newcommand{\Ker}{{\mathcal Ker}}
\newcommand{\Coker}{{\mathcal Coker}}
\newcommand{\Tor}{{\mathcal Tor}}
\newcommand{\Aut}{\operatorname{Aut}}
\newcommand{\Ext}{\operatorname{Ext}}
\newcommand{\ext}{\operatorname{ext}}
\newcommand{\Hom}{\operatorname{Hom}}
\newcommand{\h}{\operatorname{h}}
\def\H{\operatorname{H}}
\newcommand{\Hilb}{\operatorname{Hilb}}
\newcommand{\M}{\operatorname{M}}
\newcommand{\reg}{\operatorname{reg}}
\newcommand{\sing}{\operatorname{sing}}
\newcommand{\dual}{{\scriptscriptstyle \operatorname{D}}}
\newcommand{\st}{{\scriptstyle \operatorname{s}}}
\newcommand{\free}{{\scriptstyle \operatorname{free}}}
\newcommand{\el}{{\scriptstyle \operatorname{ell}}}
\newcommand{\irr}{{\scriptstyle \operatorname{irr}}}
\newcommand{\tensor}{\otimes}
\newcommand{\isom}{\simeq}
\newcommand{\lra}{\longrightarrow}
\def\egal{\ar@{=}}
\newcommand{\ba}{\begin{array}}
\newcommand{\ea}{\end{array}}
\begin{document}

\title[Moduli of space sheaves with Hilbert polynomial 4$m$+1]
{Moduli of space sheaves with Hilbert polynomial 4$m$+1}

\author{Mario Maican}
\address{Institute of Mathematics of the Romanian Academy, Calea Grivitei 21, Bucharest 010702, Romania}

\email{maican@imar.ro}

\keywords{Moduli of sheaves, Semi-stable sheaves}

\begin{abstract}
We investigate the moduli space of sheaves supported on space curves of degree $4$ and having Euler characteristic $1$.
We give an elementary proof of the fact that this moduli space consists of three irreducible components.
\end{abstract}

\subjclass[2010]{Primary 14D20}

\maketitle

\section{Introduction and preliminaries}
\label{introduction}

Let $\M_{\PP^n}(rm+\chi)$ be the moduli space of Gieseker semi-stable sheaves on the complex projective space $\PP^n$
having Hilbert polynomial $P(m) = rm+\chi$.
Le Potier \cite{lepotier} showed that $\M_{\PP^2}(rm+\chi)$ is irreducible and, if $r$ and $\chi$ are coprime, is smooth.
For low multiplicity the homology of $\M_{\PP^2}(rm+\chi)$ has been studied in \cite{choi_chung_moduli, choi_chung_geometry}, by the
wall-crossing method, and in \cite{choi_maican, maican_international, maican_sciences} by the Bia\l{y}nicki-Birula method.
When $n > 2$ the moduli space is no longer irreducible.
Thus, according to \cite{freiermuth_trautmann}, $\M_{\PP^3}(3m+1)$ has two irreducible components meeting transversally.
The focus of this paper is the moduli space $\MM = \M_{\PP^3}(4m+1)$ of stable sheaves on $\PP^3$ with Hilbert polynomial $4m+1$.
This has already been investigated in \cite{choi_chung_maican} using wall-crossing, by relating $\MM$ to $\Hilb_{\PP^3}(4m+1)$.
The main result of \cite{choi_chung_maican} states that $\MM$ consists of three irreducible components,
denoted $\overline{\RR}$, $\overline{\EE}$, $\mathbf{P}$, of dimension $16$, $17$, respectively, $20$.
The generic sheaves in $\overline{\RR}$ are structure sheaves of rational quartic curves.
The generic sheaves in $\overline{\EE}$ are of the form $\O_E(P)$, where $E$ is an elliptic quartic curve and $P$ is a point on $E$.
The third irreducible component parametrizes the planar sheaves.

The purpose of this paper is to reprove the decomposition of $\MM$ into irreducible components without using the wall-crossing method, see Theorem \ref{main_theorem}.
We achieve this as follows.
Using the decomposition of $\Hilb_{\PP^3}(4m+1)$ into irreducible components, found in \cite{chen_nollet},
we show that the subset of $\MM$ of sheaves generated by a global section is irreducible, see Proposition \ref{R_irreducible}.
This provides our first irreducible component. We then describe the sheaves having support an elliptic quartic curve,
see Section \ref{elliptic}.
To show that the set of such sheaves $\F$ is irreducible we use results from \cite{vainsencher} regarding the geometry of $\Hilb_{\PP^3}(4m)$.
Given $\F$, we construct at Proposition \ref{E_closure} a variety $\mathbf{W}$ together with a map $\sigma \colon \mathbf{W} \to \Gamma$,
the support map,
where $\Gamma \subset \Hilb_{\PP^3}(4m)$ is an irreducible quasi-projective curve, such that $\F \in \sigma^{-1}(x)$ for a point $x \in \Gamma$
and such that $\Gamma \setminus \{ x \}$ consists only of smooth curves. Moreover, the fibers of $\sigma$ are irreducible,
hence $\mathbf{W}$ is irreducible, and hence $\F$ is contained in the closure of the set of sheaves with support smooth elliptic curves.
Thus we obtain the second irreducible component.
The set $\mathbf{P}$ of planar sheaves is irreducible
because it is a bundle over the Grassmannian of planes in $\PP^3$ with fiber $\M_{\PP^2}(4m+1)$, which is, as mentioned above, irreducible.

We also rely on the cohomological classification of sheaves in $\MM$ found at \cite[Theorem 6.1]{choi_chung_maican},
which does not use the wall-crossing method (it uses the Beilinson spectral sequence).
We fix a $4$-dimensional vector space $V$ over $\CC$ and we identify $\PP^3$ with $\PP(V)$.
We fix a basis $\{ X, Y, Z, W \}$ of $V^*$.
We quote below \cite[Theorem 6.1]{choi_chung_maican}:

\begin{theorem}
\label{homological_conditions}
Let $\F$ give a point in $\M_{\PP^3}(4m+1)$.
Then $\F$ satisfies one of the following cohomological conditions:
\begin{enumerate}
\item[(i)]
$\h^0(\F \tensor \Omega^2(2)) = 0$, $\h^0(\F \tensor \Omega^1(1)) = 0$, $\h^0(\F) = 1$;
\item[(ii)]
$\h^0(\F \tensor \Omega^2(2)) = 0$, $\h^0(\F \tensor \Omega^1(1)) = 1$, $\h^0(\F) = 1$;
\item[(iii)]
$\h^0(\F \tensor \Omega^2(2)) = 1$, $\h^0(\F \tensor \Omega^1(1)) = 3$, $\h^0(\F) = 2$.
\end{enumerate}
\end{theorem}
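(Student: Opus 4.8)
The plan is to run the Beilinson spectral sequence on $\PP^3$ and to use the stability of $\F$ both to collapse it and to bound its terms. Since $\F$ is stable with Hilbert polynomial $4m+1$, it is pure of dimension one with support a curve of degree $4$; hence $\H^j(\F \tensor \E) = 0$ for all $j \geq 2$ and every locally free sheaf $\E$ on $\PP^3$. Also $\H^0(\F(-1)) = 0$: a nonzero section would give a morphism $\O(1) \to \F$ whose image is one-dimensional, hence of the form $\O_Z(1)$ for a curve $Z \subset \PP^3$ of degree $d \leq 4$, and a subsheaf of $\F$; as the arithmetic genus of such a $Z$ is at most $3$ (at most $1$ if $d \leq 3$), comparing reduced Hilbert polynomials contradicts stability. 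So the spectral sequence
\[
E_1^{p,q} = \H^q\big(\F \tensor \Omega^{-p}(-p)\big) \tensor \O(p) \ \Longrightarrow \ \F, \qquad -3 \leq p \leq 0,
\]
which converges to $\F$ placed in cohomological degree $0$, is concentrated in the rows $q \in \{0,1\}$ and has $E_1^{-3,0} = \H^0(\F(-1)) \tensor \O(-3) = 0$.

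Next I would compute the four relevant Euler characteristics by tensoring $\F$ with the exterior powers and twists of $0 \to \Omega^1(1) \to V^* \tensor \O \to \O(1) \to 0$: this gives $\chi(\F) = 1$, $\chi(\F \tensor \Omega^1(1)) = 4\chi(\F)-\chi(\F(1)) = -1$, then $\chi(\F \tensor \Omega^1(2)) = 4\chi(\F(1))-\chi(\F(2)) = 11$, whence $\chi(\F \tensor \Omega^2(2)) = 6\chi(\F) - \chi(\F \tensor \Omega^1(2)) = -5$, and $\chi(\F \tensor \Omega^3(3)) = \chi(\F(-1)) = -3$ since $\Omega^3 \isom \O(-4)$. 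Combined with the vanishings above this yields $\h^0(\F) = \h^1(\F) + 1$, $\h^0(\F \tensor \Omega^1(1)) = \h^1(\F \tensor \Omega^1(1)) - 1$, $\h^0(\F \tensor \Omega^2(2)) = \h^1(\F \tensor \Omega^2(2)) - 5$ and $\h^1(\F(-1)) = 3$; in particular $\h^0(\F) \geq 1$.

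The dimension bounds are the crux. For $\h^0(\F) \leq 2$ one applies Serre duality: $\h^1(\F) = \h^0(\F^{\dual})$, where $\F^{\dual} = \mathcal{E}xt^2(\F, \omega_{\PP^3})$ is stable of multiplicity $4$ with $\chi = -1$; a nonzero section of $\F^{\dual}$ gives a subsheaf $\O_Z \subset \F^{\dual}$ with $\chi(\O_Z) < 0$, and $p_a(Z) \leq 3$ forces $Z$ to be a plane quartic equal to the support of $\F^{\dual}$, whence $\h^0(\F^{\dual}) \leq 1$ since any two such sections have proportional images in $\O_Z$. For $\h^0(\F \tensor \Omega^1(1)) \leq 3$ and $\h^0(\F \tensor \Omega^2(2)) \leq 1$, one takes global sections in the two exact sequences $0 \to \F \tensor \Omega^1(1) \to V^* \tensor \F \to \F(1) \to 0$ and $0 \to \F \tensor \Omega^2(2) \to \wedge^2 V^* \tensor \F \to \F \tensor \Omega^1(2) \to 0$, realizing these groups as kernels of the multiplication maps out of $V^* \tensor \H^0(\F)$ and $\wedge^2 V^* \tensor \H^0(\F)$; using $\h^0(\F) \leq 2$, the vanishing of $\H^1(\F(1))$ (proved as above), and the fact that purity forbids a linear form from killing a section of $\F$ except along a planar subcurve, one bounds the relevant ranks and obtains the two inequalities, with equality only if $\h^0(\F) = 2$.

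Finally, convergence of the spectral sequence to $\F$ (concentrated in degree $0$) forces $E_\infty^{p,q} = 0$ for $p + q \neq 0$; with only two rows this amounts to a short list of exactness statements --- in particular $\H^1(\F \tensor \Omega^1(1)) \tensor \O(-1) \to \H^1(\F) \tensor \O$ must be surjective, $\H^0(\F \tensor \Omega^2(2)) \tensor \O(-2) \to \H^0(\F \tensor \Omega^1(1)) \tensor \O(-1)$ injective, and the two differentials $d_2$ of maximal rank. Feeding the bounds $1 \leq \h^0(\F) \leq 2$, $\h^0(\F \tensor \Omega^1(1)) \leq 3$, $\h^0(\F \tensor \Omega^2(2)) \leq 1$ and the $\chi$-relations into these constraints, and eliminating the finitely many remaining numerically admissible triples by computing the Chern character and the invariants of the sub- and quotient sheaves $E_\infty^{-1,1}$, $E_\infty^{0,0}$ in the filtration of $\F$ (which would otherwise contradict stability), leaves precisely $\big(\h^0(\F \tensor \Omega^2(2)),\, \h^0(\F \tensor \Omega^1(1)),\, \h^0(\F)\big) \in \{(0,0,1),\,(0,1,1),\,(1,3,2)\}$. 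I expect the main obstacle to be the third and fourth steps: bounding $\h^0$ of the twists by $\Omega^i(i)$ and excluding the spurious numerical configurations, both of which lean on the classification of low-degree curves in $\PP^3$ and on careful bookkeeping; the collapse of the spectral sequence and the Euler characteristic computation are routine.
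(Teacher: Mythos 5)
Your method matches the source: the paper does not prove this theorem itself but quotes it verbatim from \cite{choi_chung_maican}, recording only that the proof there uses the Beilinson spectral sequence, which is exactly the machine you set up. Your preparatory steps are correct: $\h^j(\F\tensor\E)=0$ for $j\ge 2$, the vanishing $\h^0(\F(-1))=0$ by comparing reduced Hilbert polynomials of the subsheaf $\O_Z(1)$ against $m+\tfrac14$, the Euler characteristics $\chi(\F)=1$, $\chi(\F\tensor\Omega^1(1))=-1$, $\chi(\F\tensor\Omega^2(2))=-5$, $\chi(\F(-1))=-3$, and hence $\h^1(\F(-1))=3$ and the three relations $\h^0=\h^1+\chi$.

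There are, however, two genuine gaps. First, your bound $\h^0(\F^\dual)\le 1$ does not follow from ``any two such sections have proportional images'': two linearly independent sections of $\F^\dual$ may generate two \emph{distinct} subsheaves $A_1\ne A_2$, each isomorphic to $\O_Z$ for the plane quartic $Z$. You must pass to $A_1+A_2$; a colength count ($\chi(A_1\cap A_2)\le -3$, hence $\chi(A_1+A_2)\ge -1$) together with stability forces $A_1+A_2=\F^\dual$, so $\F^\dual$ would be a planar sheaf globally generated by two sections, and you then have to exclude a resolution $0\to K\to 2\O_H\to\F^\dual\to 0$ with $K$ rank $2$, $c_1=-4$, $\chi=3$ on the plane $H$ --- doable, but a real step. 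Second, and more seriously, the entire content of the trichotomy is the \emph{correlation} among the three numbers: nothing in your outline rules out, say, $(0,0,2)$, $(0,1,2)$, $(1,2,2)$ or $(0,2,1)$. For $\h^0(\F)=1$ this can be closed cleanly: the kernel $K$ of $V^*\to\H^0(\F(1))$ is $\H^0(\I_{Z'}(1))$ for $Z'$ the support of the subsheaf generated by the section, which stability forces to be a planar cubic or a quartic, so $\dim K\le 1$, giving $\h^0(\F\tensor\Omega^1(1))\le 1$ and $\h^0(\F\tensor\Omega^2(2))=\dim\Lambda^2K=0$. But for $\h^0(\F)=2$ you must show that $\F$ is necessarily the planar sheaf $\O_C(-P)(1)$ and only then compute $(1,3)$; this is where the cited proof does its heavy lifting, and your sketch defers it entirely to ``eliminating the remaining numerically admissible triples,'' which is an assertion rather than an argument.
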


Let $\MM_0$, $\MM_1$, $\MM_2 \subset \MM$ be the subsets of sheaves satisfying conditions (i), (ii), respectively, (iii).
We will call them \emph{strata}.
Clearly, $\MM_0$ is open, $\MM_1$ is locally closed and $\MM_2$ is closed.
We also quote the classification of the sheaves in each stratum in terms of locally free resolutions,
which was carried out at \cite[Theorem 6.1]{choi_chung_maican}.
The sheaves in $\MM_0$ are precisely the sheaves having a resolution of the form
\begin{equation}
\label{sheaves_in_R}
0 \lra 3\O(-3) \stackrel{\psi}{\lra} 5\O(-2) \stackrel{\f}{\lra} \O(-1) \oplus \O \lra \F \lra 0
\end{equation}
\[
\f = \left[
\ba{ccccc}
X & Y & Z & W & 0 \\
q_1 & q_2 & q_3 & q_4 & q_5
\ea
\right]
\]
or a resolution of the form
\begin{equation}
\label{sheaves_in_E}
0 \lra 3\O(-3) \stackrel{\psi}{\lra} 5\O(-2) \stackrel{\f}{\lra} \O(-1) \oplus \O \lra \F \lra 0
\end{equation}
\[
\f = \left[
\ba{ccccc}
l_1 & l_2 & l_3 & 0 & 0 \\
q_1 & q_2 & q_3 & q_4 & q_5
\ea
\right]
\]
where $l_1$, $l_2$, $l_3$ are linearly independent.
Let $\RR, \EE \subset \MM_0$ be the subsets of sheaves having resolution (\ref{sheaves_in_R}), respectively, (\ref{sheaves_in_E}).
Clearly, $\RR$ is an open subset of $\MM$ and consists of structure sheaves of rational quartic curves.
The set $\EE$ contains all extensions of $\CC_P$ by $\O_E$, where $E$ is an elliptic quartic curve and $P$ is a point on $E$.
The sheaves in $\MM_1$  are precisely the sheaves having a resolution of the form
\begin{equation}
\label{sheaves_in_M_1}
0 \lra 3\O(-3) \stackrel{\psi}{\lra} 5\O(-2) \oplus \O(-1) \stackrel{\f}{\lra} 2\O(-1) \oplus \O \lra \F \lra 0
\end{equation}
where $\f_{12} = 0$ and $\f_{11} \colon 5\O(-2) \to 2\O(-1)$ is not equivalent to a morphism represented by a matrix of the form
\[
\left[
\ba{ccccc}
\star & \star & 0 & 0 & 0 \\
\star & \star & \star & \star & \star
\ea
\right] \qquad \text{or} \qquad \left[
\ba{ccccc}
\star & \star & \star & \star & 0 \\
\star & \star & \star & \star & 0
\ea
\right].
\]
The sheaves in $\MM_2$ are precisely the sheaves of the form $\O_C(-P)(1)$, where $\O_C(-P)$ in $\O_C$
denotes the ideal sheaf of a closed point $P$ in a planar quartic curve $C$.

Assume now that $\F$ has resolution (\ref{sheaves_in_R}).
Let $S \subset \PP^3$ be the quadric surface given by the equation $q_5 = 0$.
From the snake lemma we get the resolution
\[
0 \lra 3\O(-3) \lra \Omega^1(-1) \lra \O_S \lra \F \lra 0.
\]
We consider first the case when $S$ is smooth.
The semi-stable sheaves on a smooth quadric surface with Hilbert polynomial $4m+1$ have been investigated in \cite{ballico_huh}.
We cite below the main result of \cite{ballico_huh}:

\begin{proposition}
\label{smooth_quadric}
Let $\F$ be a coherent sheaf on $\PP^1 \times \PP^1$ that is semi-stable relative to the polarization $\O(1, 1)$
and such that $P_{\F}(m) = 4m +1$.
Then precisely one of the following is true:
\begin{enumerate}
\item[(i)] $\F$ is the structure sheaf of a curve of type $(1,3)$;
\item[(ii)] $\F$ is the structure sheaf of a curve of type $(3, 1)$;
\item[(iii)] $\F$ is a non-split extension $0 \to \O_E \to \F \to \CC_P \to 0$
for a curve $E$ in $\PP^1 \times \PP^1$ of type $(2, 2)$ and a point $P \in E$.
Such an extension is unique up to isomorphism and satisfies the condition $\H^1(\F) = 0$.
\end{enumerate}
Thus, $\M_{\PP^1 \times \PP^1}(4m+1)$ has three connected components.
Two of these, $\PP(\H^0(\O(1, 3)))$ and $\PP(\H^0(\O(3, 1)))$, are isomorphic to $\PP^7$.
The third one is smooth, has dimension $9$, and is isomorphic to the universal elliptic curve
in $\PP(\H^0(\O(2, 2))) \times (\PP^1 \times \PP^1)$.
The sheaves at \emph{(iii)} are precisely the sheaves having a resolution of the form
\[
0 \lra \O(-2, -1) \oplus \O(-1, -2) \stackrel{\f}{\lra} \O(-1, -1) \oplus \O \lra \F \lra 0
\]
with $\f_{11} \neq 0$, $\f_{12} \neq 0$.
\end{proposition}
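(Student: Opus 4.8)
The plan is to derive everything from the Beilinson-type spectral sequence on $X = \PP^1\times\PP^1$ attached to the Koszul resolution of the diagonal,
\[
0 \lra \O(-1,-1)\boxtimes\O(-1,-1) \lra \big[\O(-1,0)\boxtimes\O(-1,0)\big]\oplus\big[\O(0,-1)\boxtimes\O(0,-1)\big] \lra \O\boxtimes\O \lra \O_\Delta \lra 0 ,
\]
which for any coherent $\F$ yields a complex of direct sums of $\O$, $\O(-1,0)$, $\O(0,-1)$, $\O(-1,-1)$ having $\F$ as its only nonzero cohomology sheaf, the summand $\O(a,b)$ entering with multiplicity $\h^q(X,\F(a,b))$ for $(a,b)\in\{(0,0),(-1,0),(0,-1),(-1,-1)\}$. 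First I would settle purity and the support: semi-stability makes $\F$ pure of dimension one, and, writing its fundamental cycle as a curve of bidegree $(r_1,r_2)$ with $r_1+r_2=4$, one has $\chi(\F(a,b))=1+r_2a+r_1b$. A nonzero section of $\F(-1,0)$, of $\F(0,-1)$, or of $\F(-1,-1)$ would give a subsheaf of $\F$ of the form $\O_D(1,0)$, $\O_D(0,1)$, respectively $\O_D(1,1)$, on a curve $D$ of bidegree $(d_1,d_2)$, and since $\chi(\O_D)=1-(d_1-1)(d_2-1)$ a short computation shows that the reduced Hilbert polynomial of any such subsheaf exceeds $m+\tfrac14$, which is absurd; hence $\H^0(\F(-1,0))=\H^0(\F(0,-1))=\H^0(\F(-1,-1))=0$. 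If $r_1=0$ then $\chi(\F(0,-1))=1$, whereas this vanishing forces $\chi(\F(0,-1))=-\h^1(\F(0,-1))\le 0$; so $r_1\ne 0$, and symmetrically $r_2\ne 0$, leaving the support bidegree $(1,3)$, $(3,1)$ or $(2,2)$.

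Next I would run the spectral sequence. By Serre duality $\h^1(\F(a,b))=\h^0(\F^\dual(-a,-b))$, with $\F^\dual=\mathcal{E}xt^1_X(\F,\omega_X)$ again semi-stable of multiplicity $4$ and $\chi(\F^\dual)=-1$; the same estimate gives $\H^0(\F^\dual)=0$, i.e.\ $\H^1(\F)=0$, hence $\h^0(\F)=1$. Feeding the vanishings $\H^0(\F(-1,0))=\H^0(\F(0,-1))=\H^0(\F(-1,-1))=\H^1(\F)=0$ into the spectral sequence collapses it to a two-term complex, and, splitting into the three support bidegrees — $(1,3)$ and $(3,1)$ interchanged by the involution of $X$ — one reads off: for $(1,3)$, a resolution $0\to\O(-1,-3)\to\O\to\F\to 0$, so $\F\isom\O_C$ with $C\in|\O(1,3)|$; and for $(2,2)$, after normalising the connecting matrix under $\Aut$ of the two bundles,
\[
0\lra\O(-2,-1)\oplus\O(-1,-2)\stackrel{\f}{\lra}\O(-1,-1)\oplus\O\lra\F\lra 0,\qquad \f_{11}\ne 0,\ \f_{12}\ne 0 .
\]
In this last case $\det\f$ cuts out a curve $E\in|\O(2,2)|$ supporting $\F$, and the evaluation $\H^0(\F)\tensor\O\to\F$ (with $\h^0(\F)=1$) realises $\F$ as a non-split (since $\F$ is pure) extension $0\to\O_E\to\F\to\CC_P\to 0$ with $P\in E$, determined up to isomorphism by $(E,P)$ as a computation with the resolution shows. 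Conversely one checks that every structure sheaf of a curve in $|\O(1,3)|\cup|\O(3,1)|$, and every such non-split extension, is semi-stable; together with $\H^1(\F)=0$ in case (iii) this is the trichotomy.

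For the moduli space: the three strata partition $\M_{\PP^1\times\PP^1}(4m+1)$, and from the Euler characteristics above together with $\h^0(\F(-1,0))=\h^0(\F(0,-1))=0$ one computes $\big(\h^1(\F(-1,0)),\h^1(\F(0,-1))\big)=(2,0)$, $(0,2)$, $(1,1)$ in cases (i), (ii), (iii), so by upper semicontinuity in flat families each stratum is closed, and, being complementary, each is also open. The map $[C]\mapsto\O_C$ identifies the first two strata with $\PP(\H^0(\O(1,3)))\isom\PP(\H^0(\O(3,1)))\isom\PP^7$, while the third is the image of the universal curve $\Gamma\subset\PP(\H^0(\O(2,2)))\times X$ under $(E,P)\mapsto(\text{the non-split extension})$; since $\Gamma\to X$ is a $\PP^7$-bundle, $\Gamma$ is smooth, irreducible, of dimension $9$, the induced morphism to the moduli space is bijective ($E$ being the support and $P$ the cokernel of $\O_E\hookrightarrow\F$), and a deformation computation $\ext^1(\F,\F)=9$ shows the moduli space is smooth along this stratum, so that morphism is an isomorphism. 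Each stratum being clopen and irreducible is a connected component, giving exactly three. I expect the main obstacle to be the estimate converting semi-stability into the precise cohomology vanishings, and — in the $(2,2)$ case — the normal-form reduction of $\f$ together with the uniqueness of the extension; granting those, the geometry and the component count follow formally.
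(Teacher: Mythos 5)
The paper does not actually prove this proposition: it is quoted verbatim as the main result of Ballico--Huh \cite{ballico_huh}, so your argument can only be judged on its own terms. On those terms the plan is sound and the numerical core checks out: purity, the formula $\chi(\F(a,b))=1+r_2a+r_1b$, the slope estimates killing $\H^0(\F(-1,0))$, $\H^0(\F(0,-1))$, $\H^0(\F(-1,-1))$ (and, via the dual sheaf, $\H^1(\F)=0$, hence $\h^0(\F)=1$), the exclusion of the support types $(4,0)$ and $(0,4)$, the clopen-ness of the three strata via semicontinuity of $\h^1(\F(-1,0))$ and $\h^1(\F(0,-1))$, and $\ext^1(\F,\F)=1-\chi(\F,\F)=1+[E]^2=9$ with $\ext^2(\F,\F)=0$ are all correct.

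The step I would not accept as written is the passage to the stated resolution in the $(2,2)$ case. Feeding your vanishings into the Beilinson complex yields
\[
0 \lra 3\O(-1,-1) \lra \O(-1,0)\oplus\O(0,-1)\oplus\O \lra \F \lra 0,
\]
a $3\times 3$ presentation by different bundles than those in the asserted $2\times 2$ resolution, so ``normalising the connecting matrix under $\Aut$ of the two bundles'' does not describe what has to be done. One must show that the block $3\O(-1,-1)\to\O(-1,0)$, given by three forms of bidegree $(0,1)$, spans all of $\H^0(\O(0,1))$ (otherwise $\F$ acquires a destabilising quotient), cancel $\O(-1,0)$ against two of the $\O(-1,-1)$ summands by identifying the kernel of $2\O(-1,-1)\to\O(-1,0)$ with $\O(-1,-2)$, argue symmetrically on the other factor, and then check $\f_{11}\neq 0\neq\f_{12}$ for the surviving matrix; the $(1,3)$ case needs the analogous reduction to $0\to\O(-1,-3)\to\O\to\F\to 0$. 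This is elementary but it is where the content of (i)--(iii) lives, and it is also what underpins the unproved converse that every such $\f$ yields a semi-stable sheaf. Note that the trichotomy itself can be had more cheaply from $\h^0(\F)=1$ alone: the image of the evaluation map $\O\to\F$ is $\O_D$ for an effective divisor $D$ inside the support, your own slope table shows that $D$ of type strictly smaller than the support type would give a destabilising quotient, so $D$ is the whole support and $\F/\O_D$ is $0$ or $\CC_P$ with $P\in D$ by purity; the resolution in (iii) can then be derived afterwards or simply quoted, as the paper does, from \cite{ballico_huh}.
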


The following well-known lemma provides one of our main technical tools.

\begin{lemma}
Let $X$ be a projective scheme and $Y$ a subscheme.
Let $\F$ be a coherent $\O_X$-module and let $\G$ be a coherent $\O_Y$-module.
Then there is an exact sequence of vector spaces
\begin{multline}
\label{ext_sequence_1}
0 \lra \Ext^1_{O_Y}(\F_{|Y}, \G) \lra \Ext^1_{\O_X}(\F, \G) \lra \Hom_{\O_Y}(\Tor_1^{\O_X}(\F, \O_Y), \G) \\
\lra \Ext^2_{\O_Y}(\F_{|Y}, \G) \lra \Ext^2_{\O_X}(\F, \G).
\end{multline}
In particular, if $\F$ is an $\O_Y$-module, then the above exact sequence takes the form
\begin{multline}
\label{ext_sequence_2}
0 \lra \Ext^1_{O_Y}(\F, \G) \lra \Ext^1_{\O_X}(\F, \G) \lra \Hom_{\O_Y}(\F \tensor_{\O_X} \I_Y, \G) \\
\lra \Ext^2_{\O_Y}(\F, \G) \lra \Ext^2_{\O_X}(\F, \G).
\end{multline}
\end{lemma}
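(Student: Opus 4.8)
The plan is to derive the exact sequence \eqref{ext_sequence_1} from the local-to-global spectral sequence for $\Ext$ combined with base change along the closed immersion $i \colon Y \hookrightarrow X$, and then to specialize to \eqref{ext_sequence_2}. First I would take a locally free resolution $P_\bullet \to \F$ of $\F$ as an $\O_X$-module and apply $i^* = - \tensor_{\O_X} \O_Y$ to it; the homology of $i^* P_\bullet$ computes $\Tor^{\O_X}_\bullet(\F, \O_Y)$, with $\mathcal{H}_0 = \F_{|Y}$ and $\mathcal{H}_1 = \Tor^{\O_X}_1(\F, \O_Y)$. Since $\G$ is an $\O_Y$-module, adjunction gives a natural isomorphism of complexes $\Hom_{\O_X}(P_\bullet, \G) \isom \Hom_{\O_Y}(i^* P_\bullet, \G)$, so the hypercohomology of $\mathbf{R}\Hom_{\O_X}(\F, \G)$ agrees with that of $\mathbf{R}\Hom_{\O_Y}(i^* P_\bullet, \G)$; the latter is computed by a hyper-Ext spectral sequence whose $E_2$ page is $E_2^{p,q} = \Ext^p_{\O_Y}(\mathcal{H}_{-q}(i^*P_\bullet), \G)$, converging to $\Ext^{p+q}_{\O_X}(\F, \G)$. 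Only the rows $q = 0$ (from $\F_{|Y}$) and $q = -1$ (from $\Tor_1^{\O_X}(\F, \O_Y)$) contribute, so the spectral sequence degenerates into a long exact sequence; reading off its low-degree terms yields
\[
0 \to \Ext^1_{\O_Y}(\F_{|Y}, \G) \to \Ext^1_{\O_X}(\F, \G) \to \Hom_{\O_Y}(\Tor_1^{\O_X}(\F, \O_Y), \G) \to \Ext^2_{\O_Y}(\F_{|Y}, \G) \to \Ext^2_{\O_X}(\F, \G),
\]
which is \eqref{ext_sequence_1}. (An equivalent route, perhaps cleaner to write up, is to use the change-of-rings spectral sequence $\Ext^p_{\O_Y}(\Tor_q^{\O_X}(\F, \O_Y), \G) \Rightarrow \Ext^{p+q}_{\O_X}(\F, \G)$ directly; I would cite this as ``well-known'' as the statement itself is flagged.)

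For the second assertion, suppose $\F$ is an $\O_Y$-module. Then the identity $\F = \F \tensor_{\O_X} \O_Y$ gives $\F_{|Y} = \F$, which accounts for replacing $\F_{|Y}$ by $\F$ throughout. It remains to identify $\Tor_1^{\O_X}(\F, \O_Y)$ with $\F \tensor_{\O_X} \I_Y$. For this I would tensor the structure sequence $0 \to \I_Y \to \O_X \to \O_Y \to 0$ with $\F$ over $\O_X$, obtaining the exact sequence
\[
\Tor_1^{\O_X}(\F, \O_X) \to \Tor_1^{\O_X}(\F, \O_Y) \to \F \tensor_{\O_X} \I_Y \to \F \tensor_{\O_X} \O_X \to \F \tensor_{\O_X} \O_Y \to 0.
\]
The leftmost term vanishes since $\O_X$ is free, and the last map $\F \to \F_{|Y} = \F$ is the identity (an isomorphism), so $\F \tensor_{\O_X} \I_Y \to \F$ is the zero map; hence $\Tor_1^{\O_X}(\F, \O_Y) \isom \F \tensor_{\O_X} \I_Y$. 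Substituting this into \eqref{ext_sequence_1} gives \eqref{ext_sequence_2}.

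I do not anticipate a serious obstacle: the result is genuinely standard and the only point requiring a little care is bookkeeping with the spectral sequence indices and checking that the connecting-type maps in the degenerate spectral sequence are the natural ones (which follows from naturality of all constructions involved). The identification $\Tor_1^{\O_X}(\F, \O_Y) \isom \F \tensor \I_Y$ in the $\O_Y$-module case is immediate from the structure sequence as above. Since the paper explicitly labels this ``well-known,'' the write-up can afford to be brief, emphasizing the change-of-rings spectral sequence and the vanishing of higher $\Tor$ against the free module $\O_X$.
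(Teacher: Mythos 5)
The paper never proves this lemma: it is introduced as ``well-known'' and used as a black box, so there is no in-paper argument to compare against. Your proposal supplies the standard proof via the change-of-rings spectral sequence $E_2^{p,q}=\Ext^p_{\O_Y}(\Tor_q^{\O_X}(\F,\O_Y),\G)\Rightarrow\Ext^{p+q}_{\O_X}(\F,\G)$, and the two main ingredients are correct as you describe them: the construction of the spectral sequence from a locally free resolution $P_\bullet\to\F$ together with the adjunction $\Hom_{\O_X}(P_j,\G)\isom\Hom_{\O_Y}(i^*P_j,\G)$ (valid because $\G$ is an $\O_Y$-module and $i_*$ is exact for the closed immersion $i\colon Y\hookrightarrow X$), and the identification $\Tor_1^{\O_X}(\F,\O_Y)\isom\F\tensor_{\O_X}\I_Y$ when $\I_Y\F=0$, obtained by tensoring the ideal sheaf sequence with $\F$.

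One step is misstated, although the conclusion survives. You claim that only the rows $q=0$ and $q=1$ contribute, so that the spectral sequence degenerates to a long exact sequence. For a general subscheme $Y$ this is false: $\Tor_q^{\O_X}(\F,\O_Y)$ can be nonzero for $q\ge 2$ (take $\F=\O_Y$ for $Y$ a point of $\PP^3$). The vanishing does hold in every application the paper makes of the lemma, where $Y$ is a hypersurface and hence $\O_Y$ has a length-one locally free resolution, but the lemma is stated for arbitrary $Y$. The repair is immediate and costs nothing: the displayed sequence is exactly the exact sequence of low-degree terms
\[
0 \lra E_2^{1,0} \lra H^1 \lra E_2^{0,1} \stackrel{d_2}{\lra} E_2^{2,0} \lra H^2,
\]
which is valid for any first-quadrant cohomological spectral sequence with no degeneration hypothesis. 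Replace the degeneration claim by an appeal to this five-term sequence (and fix the sign convention in your indexing $\mathcal{H}_{-q}$ so that $\Tor_1$ sits in total degree $p+1$), and the write-up is complete.
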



\section{Sheaves supported on rational quartic curves}
\label{rational}

Let $\RR_0 \subset \RR$ be the set of isomorphism classes of structure sheaves $\O_R$ of curves $R \subset S$
of type $(1, 3)$ or $(3, 1)$ on smooth quadrics $S \subset \PP^3$.
A curve of type $(1, 3)$ on $S$ can be deformed inside $\PP^3$ to a curve of type $(3, 1)$, hence $\RR_0$ is irreducible of dimension $16$.
Let $\EE_0 \subset \EE$ be the set of isomorphism classes of non-split extensions of $\CC_P$ by $\O_E$
for $E \subset S$ a curve of type $(2, 2)$ on a smooth quadric $S \subset \PP^3$ and $P$ a closed point on $E$.
From (\ref{ext_sequence_2}) and Proposition \ref{smooth_quadric} (iii) we have the exact sequence
\[
0 \lra \Ext^1_{\O_S} (\CC_P, \O_E) \isom \CC \lra \Ext^1_{\O_{\PP^3}} (\CC_P, \O_E) \lra \Hom_{\O_S}^{} (\CC_P, \O_E) = 0.
\]
We denote by $\O_E(P)$ the unique non-split extension of $\CC_P$ by $\O_E$.
Clearly, $\EE_0$ is irreducible of dimension $17$.
Let $\EE_\free \subset \EE_0$ denote the open subset of sheaves that are locally free on their schematic support,
which is equivalent to saying that $P \in \reg(E)$.
Let $\mathbf{P} \subset \M_{\PP^3}(4m+1)$ be the closed set of planar sheaves. It has dimension $20$.
Let $\mathbf{P}_\free \subset \mathbf{P}$ be the open subset of sheaves that are locally free on their support.
According to \cite{iena}, $\mathbf{P} \setminus \mathbf{P}_\free$ has codimension $2$ in $\mathbf{P}$.

\begin{proposition}
The closed sets $\overline{\RR}_0$, $\overline{\EE}_0$ and $\mathbf{P}$ are irreducible components of $\M_{\PP^3}(4m+1)$.
Moreover, $\RR_0$, $\EE_\free$ and $\mathbf{P}_\free$ are smooth open subsets of the moduli space.
\end{proposition}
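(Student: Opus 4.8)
The plan is to compute $\dim_\CC\Ext^1_{\O_{\PP^3}}(\F,\F)$ for $\F$ running over each of the three locally closed sets $\RR_0$, $\EE_\free$, $\mathbf{P}_\free$, and to show it equals $16$, $17$, $20$ respectively, i.e.\ the dimension of the surrounding irreducible set. Granting this, the rest is formal: since every sheaf in $\MM$ is stable, the Zariski tangent space to $\MM$ at $[\F]$ is $\Ext^1_{\O_{\PP^3}}(\F,\F)$, so if $Z$ denotes one of the three sets and $[\F]\in Z$ then
\[
\dim Z \;=\; \dim\overline{Z} \;\le\; \dim_{[\F]}\MM \;\le\; \dim_\CC\Ext^1_{\O_{\PP^3}}(\F,\F) \;=\; \dim Z ,
\]
forcing all inequalities to be equalities: $\MM$ is smooth of dimension $\dim Z$ at $[\F]$, and the unique irreducible component through $[\F]$, being $\dim Z$-dimensional and containing the $\dim Z$-dimensional irreducible closed set $\overline{Z}$, must equal $\overline{Z}$. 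This shows $\overline{\RR}_0$, $\overline{\EE}_0$, $\mathbf{P}$ are irreducible components. As $[\F]\in Z$ was arbitrary, $\MM$ is smooth along all of $Z$; since $Z$ is locally closed in $\MM$, dense in the component $\overline{Z}$ (a dense locally closed subset of an irreducible variety being open in it), and contained in the smooth locus of $\MM$, a routine verification gives that $\RR_0$, $\EE_\free$, $\mathbf{P}_\free$ are smooth open subsets of $\MM$.

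For the three Ext computations I would invoke the exact sequence (\ref{ext_sequence_2}). For $\F=\O_R\in\RR_0$, take $Y=S$ the smooth quadric through $R$; then $S\isom\PP^1\times\PP^1$, $R$ is a smooth rational curve of bidegree $(1,3)$ or $(3,1)$, $N_{R/S}\isom\O_{\PP^1}(6)$ and $\O_R(1)\isom\O_{\PP^1}(4)$. As $\O_R$ is a divisorial sheaf on $S$ with $\H^1(\O_R)=\H^2(\O_R)=0$, one has $\Ext^1_{\O_S}(\O_R,\O_R)\isom\H^0(N_{R/S})$ of dimension $7$, $\Ext^2_{\O_S}(\O_R,\O_R)\isom\H^1(N_{R/S})=0$, and $\Hom_{\O_S}(\O_R\tensor\I_S,\O_R)\isom\H^0(\O_R(2))$ of dimension $9$; hence $\dim\Ext^1_{\O_{\PP^3}}(\O_R,\O_R)=16$. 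For $\F=\O_E(P)\in\EE_\free$, take $Y=S$ the smooth quadric through the bidegree-$(2,2)$ curve $E$; since $P\in\reg(E)$, $\F$ is invertible on $E$, so $\Ext^i_{\O_S}(\F,\F)=\Ext^i_{\O_S}(\O_E,\O_E)$, and using that $E$ is anticanonical in $S$ --- whence $\omega_E=\O_E$, $p_a(E)=1$ and $N_{E/S}=\O_E(2,2)$ of degree $8$ with vanishing $\H^1$ --- one gets $\Ext^2_{\O_S}(\O_E,\O_E)=0$, $\dim\Ext^1_{\O_S}(\O_E,\O_E)=\dim\H^1(\O_E)+\dim\H^0(N_{E/S})=1+8=9$, and $\Hom_{\O_S}(\F\tensor\I_S,\F)\isom\H^0(\O_E(2))$ of dimension $8$; hence $\dim\Ext^1_{\O_{\PP^3}}(\F,\F)=17$. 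For $\F\in\mathbf{P}_\free$ supported on a plane quartic $C$ inside a plane $H\isom\PP^2$, take $Y=H$: by Le Potier's theorem $\M_{\PP^2}(4m+1)$ is smooth of dimension $17$, so $\dim\Ext^1_{\O_H}(\F,\F)=17$; $\Ext^2_{\O_H}(\F,\F)$ is Serre-dual to $\Hom_{\O_H}(\F,\F(-3))$, which vanishes by semi-stability; and $\Hom_{\O_H}(\F\tensor\I_H,\F)=\Hom_{\O_H}(\F(-1),\F)\isom\H^0(\O_C(1))$ has dimension $3$; hence $\dim\Ext^1_{\O_{\PP^3}}(\F,\F)=20$.

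The main point to watch is that these equalities must hold at \emph{every} point of the respective stratum, not only at a general one: this is exactly what forces $\MM$ to be smooth of the expected dimension all along $\RR_0$, $\EE_\free$, $\mathbf{P}_\free$ and thereby produces the openness and smoothness assertions. The reason it works out is that $R$ and $E$ are effective divisors on a smooth quadric and $C$ is an effective divisor in a plane, so in each case the structure sheaf of the curve has homological dimension $1$ on the ambient surface, the local Ext-sheaves there are explicit normal bundles, and the numerology above is uniform over the stratum; in particular the possibly singular members --- a singular bidegree-$(2,2)$ curve $E$ with $P$ a smooth point, or a singular plane quartic --- require no separate treatment. Beyond elementary cohomology on $\PP^1\times\PP^1$ and $\PP^2$, the only external input is the smoothness of $\M_{\PP^2}(4m+1)$ recalled in the introduction.
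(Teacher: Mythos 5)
Your proposal is correct and follows essentially the same route as the paper: in both cases the key step is the exact sequence (\ref{ext_sequence_2}) relating $\Ext^1_{\O_{\PP^3}}(\F,\F)$ to $\Ext^1$ on the ambient surface plus the term $\Hom(\F\tensor\I_Y,\F)$, yielding the dimensions $16$, $17$, $20$ that match the dimensions of $\overline{\RR}_0$, $\overline{\EE}_0$, $\mathbf{P}$. The only cosmetic difference is that you compute $\ext^1_{\O_S}$ via normal bundles where the paper quotes the smoothness and dimension of the relevant moduli spaces on the quadric and on $\PP^2$.
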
 

\begin{proof}
Let $\F = \O_R$ give a point in $\RR_0$, where $R \subset S$ is a curve of, say, type $(1, 3)$.
From Serre duality we have
\[
\Ext^2_{\O_S}(\F, \F) \isom \Hom_{\O_S}^{}(\F, \F(-2, -2))^* = 0.
\]
From the exact sequence (\ref{ext_sequence_2}) we get the relation
\[
\ext^1_{\O_{\PP^3}}(\F, \F) = \ext^1_{\O_S}(\F, \F) + \hom_{\O_S}^{}(\F(-2), \F) = 7 + \h^0(\O_R(2, 2)) = 16.
\]
This shows that $\overline{\RR}_0$ is an irreducible component of $\MM$ and that $\RR_0$ is smooth.

Consider next $\F = \O_E(P)$ giving a point in $\EE_0$.
As above, we have the relation
\[
\ext^1_{\O_{\PP^3}}(\F, \F) = \ext^1_{\O_S}(\F, \F) + \hom_{\O_S}^{}(\F(-2), \F) = 9 + \hom_{\O_S}^{}(\F, \F(2, 2)).
\]
Assume, in addition, that $\F$ is locally free on $E$.
Its rank must be $1$ because $E$ is a curve of multiplicity $4$.
Thus
\[
\Hom_{\O_S}^{}(\F, \F(2, 2)) \isom \H^0(\O_E(2, 2)) \isom \CC^8,
\]
hence $\ext^1_{\O_{\PP^3}}(\F, \F) = 17$.
This shows that $\overline{\EE}_0$ is an irreducible component of $\MM$ and that $\EE_\free$ is smooth.

Assume now that $\F$ is supported on a planar quartic curve $C \subset H$.
Using Serre duality and (\ref{ext_sequence_2}) we get the relation
\[
\ext^1_{\O_{\PP^3}}(\F, \F) = \ext^1_{\O_H}(\F, \F) + \hom_{\O_H}^{}(\F(-1), \F) = 17 + \hom_{\O_H}^{}(\F, \F(1)).
\]
Assume, in addition, that $\F$ is locally free on $C$, so a line bundle.
Thus
\[
\Hom_{\O_H}^{}(\F, \F(1)) \isom \H^0(\O_C(1)) \isom \CC^3,
\]
hence $\ext^1_{\O_{\PP^3}}(\F, \F) = 20$.
This shows that $\mathbf{P}$ is an irreducible component of $\MM$ and that $\mathbf{P}_\free$ is smooth.
\end{proof}

\begin{remark}
\label{planar_sheaves}
Let $\F$ be a one-dimensional sheaf on $\PP^3$ without zero-dimensional torsion.
Let $\F'$ be a planar subsheaf such that $\F/\F'$ has dimension zero. Then $\F$ is planar.
Indeed, say that $\F'$ is an $\O_H$-module for a plane $H \subset \PP^3$.
From (\ref{ext_sequence_1}) we have the exact sequence
\[
0 \to \Ext^1_{\O_H}((\F/\F')_{| H}, \F') \to \Ext^1_{\O_{\PP^3}}(\F/\F', \F') \to \Hom_{\O_H}(\Tor_1^{\O_{\PP^3}}(\F/\F', \O_H), \F').
\]
The group on the right vanishes because $\Tor_1^{\O_{\PP^3}}(\F/\F', \O_H)$ is supported on finitely many points,
yet $\F'$ has no zero-dimensional torsion. Thus $\F \in \Ext^1_{\O_H}((\F/\F')_{| H}, \F')$, so $\F$ is an $\O_H$-module.
\end{remark}

\begin{proposition}
\label{non-planar_M_1}
The non-planar sheaves in $\M_{\PP^3}(4m+1)$ having resolution (\ref{sheaves_in_M_1}) are precisely the non-split
extensions of the form
\begin{equation}
\label{sheaves_in_D'}
0 \lra \O_C \lra \F \lra \O_L \lra 0
\end{equation}
where $C$ is a planar cubic curve and $L$ is a line meeting $C$ with multiplicity $1$.
For such a sheaf, $\H^0(\F)$ generates $\O_C$.
The set $\RR$ consists precisely of the sheaves generated by a global section.
The set $\EE$ consists precisely of the sheaves $\F$ such that $\H^0(\F)$ generates a subsheaf with Hilbert polynomial $4m$.
\end{proposition}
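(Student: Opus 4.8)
The plan is to exploit the stratification $\MM = \MM_0 \sqcup \MM_1 \sqcup \MM_2$, with $\MM_0 = \RR \cup \EE$, against the three explicit resolutions recalled above; write $p(\cdot)$ for the reduced Hilbert polynomial. I record first a preliminary computation used repeatedly: for $\F$ in $\RR$ or $\EE$, the composite $\O \hookrightarrow \O(-1) \oplus \O \to \F$ (the second summand, followed by the surjection from the resolution) has cokernel $\operatorname{Coker}\!\big(\f' \colon 5\O(-2) \to \O(-1)\big)$, where $\f'$ is the top row of $\f$. For $\F \in \RR$ this row is $[\,X\ Y\ Z\ W\ 0\,]$, so the cokernel is $0$ since $X,Y,Z,W$ generate the unit ideal sheaf; for $\F \in \EE$ it is $[\,l_1\ l_2\ l_3\ 0\ 0\,]$, so the cokernel is $\O(-1)/\I_\Lambda(-1) \isom \CC_\Lambda$ for the single reduced point $\Lambda = \{l_1 = l_2 = l_3 = 0\}$. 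Since $\h^0(\F) = 1$ on all of $\MM_0$, it follows that every sheaf in $\RR$ is generated by a global section, whereas for $\F \in \EE$ the subsheaf generated by $\H^0(\F)$ has Hilbert polynomial $4m$ (and is the structure sheaf of an elliptic quartic curve).

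Now let $\F$ have resolution (\ref{sheaves_in_M_1}), so $\f_{12} = 0$; write $\f_{22}$ as multiplication by a linear form $w$. One has $w \ne 0$, for otherwise $\O(-1)$ would be a direct summand of $\ker\f = \operatorname{Im}\psi$, forcing a nonzero map $\O(-1) \to 3\O(-3)$, which is impossible. Put $H = \{w = 0\}$. Because $\f_{12} = 0$, the second direct summands make $\big[\,\O(-1) \stackrel{w}{\lra} \O\,\big]$ a subcomplex of (\ref{sheaves_in_M_1}), with quotient complex $\big[\,3\O(-3) \stackrel{\psi_1}{\lra} 5\O(-2) \stackrel{\f_{11}}{\lra} 2\O(-1)\,\big]$; the long exact cohomology sequence shows $\psi_1$ injective and yields $0 \to \mathcal K \to \O_H \stackrel{\delta}{\lra} \F \to \operatorname{Coker}\f_{11} \to 0$ with $\mathcal K = \ker\f_{11}/\operatorname{Im}\psi_1 \subseteq \O_H$, hence a short exact sequence $0 \to \O_Z \to \F \to \operatorname{Coker}\f_{11} \to 0$, $\O_Z := \operatorname{Im}\delta = \O_H/\mathcal K$. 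As a quotient of the pure sheaf $\F$, $\operatorname{Coker}\f_{11}$ is at most one-dimensional; were it zero-dimensional, $\F$ would be planar by Remark \ref{planar_sheaves}, so it is one-dimensional and $\O_Z \ne 0$, whence $Z$ is an effective divisor in $H \isom \PP^2$ and $\mathcal K = \I_{Z/H}$. The hypothesis that $\f_{11}$ is not equivalent to either forbidden matrix forces $\f_{11}$ to have generic rank $2$: a $2 \times 5$ matrix of linear forms of generic rank $\le 1$ has proportional rows, hence is row-equivalent to one with a vanishing row, hence, after column operations on the other row, to one of the two forbidden shapes. Comparing Euler characteristics in the four-term sequence gives $P_{\operatorname{Coker}\f_{11}}(m) - P_{\mathcal K}(m) = (5m - m^2)/2$; writing $P_{\operatorname{Coker}\f_{11}}(m) = am + b$ this becomes $P_{\O_Z}(m) = (4-a)m + (1-b)$ with $1 - b = (4-a)(a-1)/2$. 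Stability of $\F$ forces $p(\operatorname{Coker}\f_{11}) \ge m + \tfrac14$, leaving only $a \in \{1,4\}$; and $a = 4$ would give $\deg Z = 0$, so $\O_Z = 0$ and $\F = \operatorname{Coker}\f_{11}$, contradicting that (\ref{sheaves_in_M_1}) — whose differentials have no constant entries, $\f_{12}$ being zero — is the minimal free resolution of $\F$. Hence $a = b = 1$: $\O_Z = \O_C$ for a plane cubic $C$, and $\operatorname{Coker}\f_{11}$ has Hilbert polynomial $m+1$. Finally $\operatorname{Coker}\f_{11} = \O_L$ for a line $L$, since any zero-dimensional torsion in it, pulled back into $\F$, would produce a planar subsheaf of $\F$ with one-dimensional quotient of multiplicity $1$, violating stability. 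Thus $0 \to \O_C \to \F \to \O_L \to 0$.

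It remains to locate $L$ relative to $C$. If $L$ lay in $H$ and were not a component of $C$, then (\ref{ext_sequence_1}) with $Y = H$ would give $\Ext^1_{\O_{\PP^3}}(\O_L,\O_C) = \Ext^1_{\O_H}(\O_L,\O_C)$, because the intervening $\Hom_{\O_H}(\O_L(-1),\O_C)$ vanishes for curves meeting properly in a plane, and $\F$ would be an $\O_H$-module — excluded. Ruling out $L \subseteq H$ altogether (equivalently, $L$ not a component of $C$) is the point where the forbidden-matrix hypotheses have to be used in full, and this is the step I expect to be the main obstacle. Granting $L \not\subseteq H$, $L \cap C \subseteq L \cap H$ is at most one reduced point $P$; if $P \notin C$ the supports of $\O_L$ and $\O_C$ are disjoint, so $\Ext^1_{\O_{\PP^3}}(\O_L,\O_C) = 0$, the sequence splits, and $\F \isom \O_C \oplus \O_L$, impossible since $\O_L$ destabilizes. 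Hence $P \in C$, $L$ meets $C$ with multiplicity $1$, and the extension is non-split ($\O_C \oplus \O_L$ not being semistable). For the converse, let $\F$ be a non-split extension $0 \to \O_C \to \F \to \O_L \to 0$ as in the statement. Then $\F$ is pure one-dimensional (a zero-dimensional subsheaf meets the Cohen–Macaulay $\O_C$ trivially, hence embeds into the pure $\O_L$); $\F$ is not planar, since any plane containing $C$ is $H$ and $L \subseteq H$ would force $\operatorname{length}(L \cap C) = 3$; and $\F$ is stable, by a direct case analysis of a subsheaf $\F' \subseteq \F$ through $\F' \cap \O_C$ and the image of $\F'$ in $\O_L$, using that $\O_C$ is semistable with $p(\O_C) = m$, that a proper subsheaf of $\O_C$ of multiplicity $\le 2$ has Euler characteristic $\le -1$, and that $\O_L$ has multiplicity $1$. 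So $\F \in \MM$, and being non-planar it lies in $\MM_0 \cup \MM_1$; the cohomology sequence of the extension, with $\h^0(\O_C) = \h^1(\O_C) = \h^0(\O_L) = 1$ and $\h^1(\O_L) = 0$, gives $1 \le \h^0(\F) \le 2$, while $\h^0(\F) = 2$ would put $\F$ in $\MM_2$, hence planar; so $\h^0(\F) = 1$, the section coming from $\O_C$. Therefore $\H^0(\F)$ generates the image of $\O_C$, which is all of $\O_C$ — the second assertion — and since this subsheaf has Hilbert polynomial $3m$, differing from both $4m$ and $4m+1$, the preliminary computation rules out $\F \in \RR$ and $\F \in \EE$, so $\F \in \MM_1$; this completes the first assertion.

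The last two assertions now follow by elimination. If $\F$ is generated by a single global section, then $\F = \O_Z$ for a pure one-dimensional subscheme $Z \subset \PP^3$ of degree $4$ with $\chi(\O_Z) = 1$; such $Z$ cannot be planar, since a plane quartic divisor has $\chi = -2$, so $\F \notin \MM_2$; $\F \notin \MM_1$ by the first assertion (there $\H^0(\F)$ generates only the cubic, of the wrong Hilbert polynomial) and $\F \notin \EE$ by the preliminary computation, whence $\F \in \RR$; together with the inclusion "sheaves in $\RR$ are generated by a global section" from the preliminary computation, this gives the third assertion. Likewise, if $\H^0(\F)$ generates a subsheaf of Hilbert polynomial $4m$, then $\F$ lies in none of $\RR$ (where $\H^0$ generates all of $\F$), $\MM_1$ (where it generates $\O_C$, of multiplicity $3$), or $\MM_2$ — on the last stratum $\O_C(-P)(1) = \I_{P/C}(1)$ is generated by its two sections, because the ideal $(\ell_1,\ell_2)$ of the point $P$ inside the plane restricts to $\I_{P/C}$ on the quartic, so $\H^0$ there generates all of $\F$ — whence $\F \in \EE$, giving the fourth assertion.
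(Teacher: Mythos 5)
Your overall strategy runs parallel to the paper's (extract the extension $0 \lra \O_C \lra \F \lra \O_L \lra 0$ from resolution (\ref{sheaves_in_M_1}) via the snake lemma, locate $L$ with the sequences (\ref{ext_sequence_1})--(\ref{ext_sequence_2}), and settle the last two assertions by elimination), with one genuine variation: you replace the paper's explicit case analysis of the normal forms of $\f_{11}$ by a purity-plus-Hilbert-polynomial argument. But the proof as written has a real, and self-acknowledged, gap: you never exclude the case where $L$ is a component of the cubic $C$ (equivalently $L \subset H$ with $C = L \cup C'$), and you announce that you expect this to be ``the main obstacle,'' requiring the full forbidden-matrix hypotheses. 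It does not: the paper disposes of it by the very Ext computation you already used for the case $L \subset H$, $L \not\subset C$. Whether or not $L$ is a component of $C$, the sequence (\ref{ext_sequence_2}) reads $0 \to \Ext^1_{\O_H}(\O_L,\O_C) \to \Ext^1_{\O_{\PP^3}}(\O_L,\O_C) \to \Hom_{\O_H}(\O_L(-1),\O_C)$, and the last group vanishes: a nonzero map has image a quotient of $\O_L(-1) \isom \O_{\PP^1}(-1)$ sitting inside $\O_C$; a zero-dimensional image dies in the pure sheaf $\O_C$, and a one-dimensional quotient of $\O_{\PP^1}(-1)$ is all of $\O_{\PP^1}(-1)$, which has reduced Hilbert polynomial $m$, equal to that of $\O_C$ --- contradicting the stability of $\O_C$. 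So every such extension with $L \subset H$ is planar, and no matrix hypotheses are needed.

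Two smaller points. First, your exclusion of the case $a=4$ (i.e.\ $\O_Z=0$, $\F \isom \Coker(\f_{11})$) by appeal to ``minimality of the resolution'' is not a proof as stated: $\ker \f_{11}$ is not known to be free, and the passage between minimal graded resolutions and the sheafified resolution (\ref{sheaves_in_M_1}) would need to be justified. A clean fix inside your own setup: in the four-term sequence $0 \to \mathcal{K} \to \O_H \to \F \to \Coker(\f_{11}) \to 0$ the sheaf $\mathcal{K}$ is an extension of a subsheaf of $5\O(-2)$ by $\operatorname{im}\psi_1 \isom 3\O(-3)$, so $\H^0(\mathcal{K})=0$, hence $\mathcal{K} \neq \O_H$ and $\O_Z \neq 0$. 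Second, in the final elimination you assert that on $\MM_1$ the sections generate a multiplicity-$3$ subsheaf, but your first assertion establishes this only for the \emph{non-planar} sheaves of $\MM_1$; for the planar ones you still must rule out that $\H^0(\F)$ generates $\F$ or a subsheaf with Hilbert polynomial $4m$. The paper cites \cite{drezet_maican} here; alternatively your own Euler-characteristic argument closes this, since such a subsheaf would be $\O_Z$ for a pure planar $Z$ of degree $4$, which has $\chi = -2$.
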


\begin{proof}
Let $\f$ be a morphism as at (\ref{sheaves_in_M_1}).
Denote $\G = \Coker(\f_{11})$ and let $H \subset \PP^3$ be the plane given by the equation $\f_{22} = 0$.
From the snake lemma we have the exact sequence
\[
\O_H \lra \F \lra \G \lra 0.
\]
We examine first the case when
\[
\f_{11} \nsim \left[
\ba{ccccc}
0 & 0 & \star & \star & \star \\
\star & \star & \star & \star & \star
\ea
\right]. \quad \text{Thus we may write} \quad \f_{11} = \left[
\ba{ccccc}
X & Y & Z & W & 0 \\
0 & l_1 & l_2 & l_3 & l_4
\ea
\right].
\]
If $l_4$ is a multiple of $X$, then  $P_{\G} = 3$ (see the proof of \cite[Theorem 6.1(iii)]{choi_chung_maican}), hence,
by Remark \ref{planar_sheaves}, $\F$ is planar.
Assume now that $l_4$ is not a multiple of $X$ and let $L \subset \PP^3$ be the line given by the equations
$X = 0$, $l_4 = 0$.
Then $\G$ is a proper quotient sheaf of $\O_L(-1)$, hence it has support of dimension zero, and hence,
by Remark \ref{planar_sheaves}, $\F$ is planar.
It remains to examine the case when
\[
\f_{11} = \left[
\ba{ccccc}
u_1 & u_2 & u_3 & 0 & 0 \\
0 & v_1 & v_2 & v_3 & v_4
\ea
\right].
\]
Let $P$ be the point given by the ideal $(u_1, u_2, u_3)$ and let $L$ be the line given by the equations $v_3 = 0$, $v_4 = 0$.
We have an exact sequence
\[
\O_L(-1) \lra \G \lra \CC_P \lra 0.
\]
If the first morphism is not injective, then $\G$ has dimension zero, hence $\F$ is planar.
If $\G$ is an extension of $\CC_P$ by $\O_L(-1)$, then this extension does not split, otherwise $\O_L(-1)$ would be a destabilizing
quotient sheaf of $\F$. Thus, $\G \isom \O_L$ and we have an exact sequence
\[
0 \lra \E \lra \F \lra \O_L \lra 0
\]
where $\E$ gives a point in $\M_H(3m)$ and is generated by a global section.
Thus $\E$ is the structure sheaf of a cubic curve $C \subset H$.
If $L \subset H$, then from (\ref{ext_sequence_2}) we would have the exact sequence
\[
0 \lra \Ext^1_{\O_H} (\O_L, \O_C) \lra \Ext^1_{\O_{\PP^3}} (\O_L, \O_C) \lra \Hom_{\O_H}(\O_L(-1), \O_C).
\]
The group on the right vanishes because $\O_C$ is stable. We deduce that $\F$ lies in $\Ext^1_{\O_H} (\O_L, \O_C)$,
hence $\F$ is planar.

Thus far we have showed that if $\F$ is non-planar and has resolution (\ref{sheaves_in_M_1}), then $\F$ is an extension
as in the proposition. Conversely, given a non-split extension (\ref{sheaves_in_D'}), then $\F$ is semi-stable,
because $\O_C$ and $\O_L$ are stable.
In view of Theorem \ref{homological_conditions}, since $\F$ is non-planar, we have $\h^0(\F) = 1$.
Thus $\H^0(\F)$ generates $\O_C$.
It follows that $\F$ cannot have resolutions (\ref{sheaves_in_R}) or (\ref{sheaves_in_E}), otherwise $\H^0(\F)$
would generate $\F$ or would generate a subsheaf with Hilbert polynomial $4m$.
We conclude that $\F$ has resolution (\ref{sheaves_in_M_1}).

The rest of the proposition follows from Theorem \ref{homological_conditions}
and from the fact, proved in \cite{drezet_maican}, that for a planar sheaf $\F$ having resolution (\ref{sheaves_in_M_1}),
the space of global sections generates a subsheaf with Hilbert polynomial $4m-2$ or it generates the structure sheaf of a cubic curve.
\end{proof}

\begin{proposition}
\label{R_irreducible}
The set $\RR$ of sheaves in $\M_{\PP^3}(4m+1)$ generated by a global section is irreducible.
\end{proposition}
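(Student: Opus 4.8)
The plan is to realize $\RR$ as the image of an open subset of the Hilbert scheme $\Hilb_{\PP^3}(4m+1)$ and then to invoke the description of the irreducible components of that Hilbert scheme due to Chen and Nollet \cite{chen_nollet}. By Proposition \ref{non-planar_M_1}, a sheaf $\F$ gives a point of $\RR$ precisely when it is generated by a global section, that is, when there is a surjection $\O \to \F$. The kernel of such a surjection is the ideal sheaf of a closed subscheme $Z \subset \PP^3$ with $\O_Z \isom \F$; since $\F$ is stable, hence pure of dimension $1$, the curve $Z$ has no embedded or isolated points, so $Z$ is a locally Cohen--Macaulay curve of degree $4$ and arithmetic genus $0$, i.e.\ a point of $\Hilb_{\PP^3}(4m+1)$. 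Conversely, if $Z \in \Hilb_{\PP^3}(4m+1)$ is such that $\O_Z$ is stable, then $\O_Z$ is generated by the global section $1$, so $[\O_Z] \in \RR$. Let $U \subset \Hilb_{\PP^3}(4m+1)$ be the locus of such $Z$. Applied to the structure sheaf of the universal subscheme $\mathcal Z \subset \Hilb_{\PP^3}(4m+1) \times \PP^3$, the conditions ``$\O_Z$ is pure'' and ``$\O_Z$ is stable'' are open, so $U$ is open in $\Hilb_{\PP^3}(4m+1)$, and $\O_{\mathcal Z}$ restricted to $U$ induces a morphism $U \to \MM$ with image $\RR$. It therefore suffices to prove that $U$ is irreducible.

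The crucial point is that $U$ is contained in a single irreducible component of $\Hilb_{\PP^3}(4m+1)$. According to \cite{chen_nollet}, $\Hilb_{\PP^3}(4m+1)$ has an irreducible component $H$ of dimension $16$ whose generic point is a smooth, connected, nondegenerate rational quartic curve, while every other irreducible component has generic point a curve $Z$ for which $\O_Z$ fails to be stable: either $Z$ is not locally Cohen--Macaulay, so $\O_Z$ is not pure, or $Z$ is a disjoint union $Z = C \sqcup L$ of a plane cubic curve $C$ and a line $L$, in which case $\O_Z \isom \O_C \oplus \O_L$ and the subsheaf $\O_L$, of reduced Hilbert polynomial $m+1 > m + 1/4$, destabilizes $\O_Z$. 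Since $U$ is open and every component of $\Hilb_{\PP^3}(4m+1)$ is irreducible, a component meeting $U$ would have $U$ dense in it, hence would have its generic point in $U$; by the previous sentence this forces the component to be $H$. Thus $U \subseteq H$.

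Finally, $U$ is nonempty, because it contains every curve of type $(1,3)$ on a smooth quadric surface: the structure sheaf of such a curve is a stable sheaf lying in $\RR_0 \subset \RR$. Hence $U$ is a nonempty open subset of the irreducible variety $H$, so $U$ is irreducible, and therefore $\RR$, being the image of $U$ under the morphism $U \to \MM$, is irreducible.

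The main obstacle is the input from \cite{chen_nollet}: one must extract the complete list of irreducible components of $\Hilb_{\PP^3}(4m+1)$ and verify that, apart from the component of rational quartic curves, each component has generic point a curve $Z$ whose structure sheaf is either impure or manifestly not semi-stable. The remaining ingredients --- the identification of $\RR$ with the open locus $U$, the openness of purity and of stability in the universal family, and the fact that irreducibility is inherited by dense open subsets and by images of morphisms --- are routine.
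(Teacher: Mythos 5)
Your proposal is correct and follows essentially the same route as the paper: both realize $\RR$ as the image of the open locus of (semi)stable quotients in $\Hilb_{\PP^3}(4m+1)$ and use the Chen--Nollet decomposition to show that this locus lies entirely in the single irreducible component whose generic member is a rational quartic, hence is irreducible. The only discrepancy is in your recollection of the other components --- the paper lists three of them (planar cubic $\sqcup$ line, elliptic quartic $\sqcup$ point, planar quartic $\sqcup$ three points), all with disconnected generic member, and excludes them via the closed condition $\h^0 \ge 2$ rather than by your openness-forces-the-generic-point argument; both mechanisms work.
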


\begin{proof}
Let $\Hilb_{\PP^3}(4m+1)^\st \subset \Hilb_{\PP^3}(4m+1)$ be the open subset of semi-stable quotients.
The image of the canonical map
\[
\Hilb_{\PP^3}(4m+1)^\st \lra \M_{\PP^3}(4m+1)
\]
is $\RR$.
According to \cite[Theorem 4.9]{chen_nollet},
$\Hilb_{\PP^3}(4m+1)$ has four irreducible components, denoted $H_1$, $H_2$, $H_3$, $H_4$.
The generic point in $H_1$ is a rational quartic curve.
The generic curve in $H_2$ is the disjoint union of a planar cubic and a line.
The generic member of $H_3$ is the disjoint union of a point and an elliptic quartic curve.
The generic member of $H_4$ is the disjoint union of a planar quartic curve and three distinct points.
Thus, $H_2 \cup H_3 \cup H_4$ lies in the closed subset
\[
H = \{ [\O \twoheadrightarrow \mathcal{S}] \mid \ \h^0(\mathcal{S}) \ge 2 \} \subset \Hilb_{\PP^3}(4m+1).
\]
According to Theorem \ref{homological_conditions}, $H^\st = \emptyset$.
Indeed, any sheaf in $\MM_2$ cannot be generated by a single global section.
Thus, $\Hilb_{\PP^3}(4m+1)^\st$ is an open subset of $H_1$, hence it is irreducible, and hence $\RR$ is irreducible.
\end{proof}


\section{Sheaves supported on elliptic quartic curves}
\label{elliptic}

We will next examine the sheaves $\F$ having resolution (\ref{sheaves_in_E}).
Let $P$ be the point given by the ideal $(l_1, l_2, l_3)$.
Notice that the subsheaf of $\F$ generated by $\H^0(\F)$ is the kernel of the canonical map $\F \to \CC_P$.
This shows that $\F$ is non-planar because, according to \cite{drezet_maican},
the global sections of a sheaf in $\M_{\PP^2}(4m+1)$ whose first cohomology vanishes
generate a subsheaf with Hilbert polynomial $4m - 2$ or the structure sheaf of a planar cubic curve, which is not the case here.
We consider first the case when $q_4$ and $q_5$ have no common factor, so they define a curve $E$.
Applying the snake lemma to the diagram
\[
\xymatrix
{
& & 0 \ar[d] & 0 \ar[d] \\
0 \ar[r] & \O(-4) \ar[r]^-{\tiny \left[ \!\!\! \ba{c} q_5 \\ q_4 \ea \!\!\! \right]} & 2\O(-2) \ar[r]^-{[-q_4 \ q_5]} \ar[d] & \O \ar[r] \ar[d] & \O_E \ar[r] & 0 \\
0 \ar[r] & 3\O(-3) \ar[r] & 5\O(-2) \ar[r]^-{\f} \ar[d] & \O(-1) \oplus \O \ar[r] \ar[d] & \F \ar[r] & 0 \\
0 \ar[r] & \mathcal{K} \ar[r] & 3\O(-2) \ar[r]^-{[l_1 \ l_2 \ l_3]} \ar[d] & \O(-1) \ar[r] \ar[d] & \CC_P \ar[r] & 0 \\
& & 0 & 0
}
\]
we see that $\F$ is an extension of $\CC_P$ by $\O_E$. 
From Serre duality we have
\[
\Ext^1_{\O_{\PP^3}}(\CC_P, \O_E) \isom \Ext^2_{\O_{\PP^3}}(\O_E, \CC_P)^* \isom \CC.
\]
The group in the middle can be determined by applying $\Hom(\rule{7pt}{.5pt}, \CC_P)$ to the first row of the diagram above.
We may write $\F = \O_E(P)$.

\begin{proposition}
\label{F_stable}
The sheaf $\O_E(P)$ is stable.
\end{proposition}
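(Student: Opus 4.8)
The plan is to show that any quotient sheaf of $\F = \O_E(P)$ destabilizing it would force a contradiction, using the known structure of $\F$ as a non-split extension $0 \to \O_E \to \F \to \CC_P \to 0$ together with the stability of $\O_E$ (the structure sheaf of an elliptic quartic curve is stable, having reduced Hilbert polynomial $m$). First I would recall that $\F$ is a pure one-dimensional sheaf: the kernel $\O_E$ is pure of dimension one (it is $\O_E$ for a curve), and $\CC_P$ has dimension zero, so any zero-dimensional torsion subsheaf of $\F$ would have to inject into $\CC_P$; if it were nonzero it would be all of $\CC_P$, forcing the extension to split, contrary to hypothesis. Hence $\F$ has no zero-dimensional torsion, its reduced Hilbert polynomial is $m + 1/4$, and to test stability it suffices to consider quotient sheaves $\F \twoheadrightarrow \Q$ with $\dim \Q = 1$ and $0 < \deg(\text{supp}\,\Q) < 4$, i.e. with multiplicity $r(\Q) \in \{1, 2, 3\}$, and verify $\chi(\Q)/r(\Q) > 1/4$ in each case.

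Next I would argue via the kernel: a destabilizing quotient $\Q$ of multiplicity $r$ corresponds to a subsheaf $\G \subset \F$ with $P_\G(m) = (4-r)m + (1 - \chi(\Q))$, and $\chi(\Q)/r \le 1/4$ is equivalent to $\chi(\G)/(4-r) \ge 1/4$, i.e. $\G$ weakly destabilizes $\F$. Intersecting with $\O_E \subset \F$, set $\G_0 = \G \cap \O_E$, a subsheaf of $\O_E$, and consider the induced sequence $0 \to \G_0 \to \G \to \G/\G_0 \to 0$ with $\G/\G_0 \hookrightarrow \CC_P$. So either $\G \subseteq \O_E$ or $\G/\G_0 \isom \CC_P$ and $\G_0$ is a subsheaf of $\O_E$ of multiplicity $4 - r$ with $\chi(\G_0) \ge \chi(\G) - 1$. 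In the first case stability of $\O_E$ gives $\chi(\G)/(4-r) \le \chi(\O_E)/4 = 0 < 1/4$ unless $\G = 0$, a contradiction. In the second case, $\G_0 \subseteq \O_E$ has multiplicity $4 - r < 4$, so again by stability of $\O_E$ we get $\chi(\G_0) < 0$, hence $\chi(\G) \le \chi(\G_0) + 1 \le 0$, and $\chi(\G)/(4-r) \le 0 < 1/4$ — a contradiction with $\G$ weakly destabilizing. The only remaining possibility is $r = 4$, i.e. $\G$ is zero-dimensional, which we have excluded since $\F$ is pure; or $\G_0 = \O_E$ itself, forcing $r = 0$.

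Finally I would handle the boundary case of semistability versus stability carefully: a quotient $\Q$ with $\chi(\Q)/r(\Q) = 1/4$ would correspond to $\G$ with $\chi(\G)/(4-r) = 1/4$, and the argument above already rules this out since it produces $\chi(\G) \le 0 < (4-r)/4$ whenever $0 < 4 - r \le 4$, with equality $\chi(\G_0) = 0$ only when $\G_0$ has multiplicity $4$, i.e. $\G_0 = \O_E$ and $\G = \F$. So in fact no proper nonzero subsheaf even weakly destabilizes, and $\F$ is stable. The main obstacle I anticipate is bookkeeping the two cases ($\G$ inside $\O_E$ versus $\G$ surjecting onto $\CC_P$) and making sure the multiplicity/Euler-characteristic inequalities are applied with the correct strict/non-strict signs; the underlying mechanism — that $\O_E$ is stable with reduced Hilbert polynomial $m$ and that the extension is non-split — does all the real work, so the proof should be short.
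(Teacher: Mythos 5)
Your reduction of the problem to the stability of $\O_E$ is sound, and in fact you work out in detail the step the paper dispatches in one sentence (``We will show that $\O_E$ is stable, forcing $\O_E(P)$ to be stable''): purity of $\F$ via non-splitness, and the intersection $\G_0 = \G \cap \O_E$ of a would-be destabilizing subsheaf with $\O_E$, giving $\chi(\G) \le \chi(\G_0) + 1 \le 0$, which contradicts $\chi(\G)/r_{\G} \ge 1/4$. That part is correct.

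The genuine gap is that you assume exactly what the paper's proof is actually about. Your parenthetical justification --- ``the structure sheaf of an elliptic quartic curve is stable, having reduced Hilbert polynomial $m$'' --- is not an argument: having reduced Hilbert polynomial $m$ says nothing about the absence of destabilizing subsheaves. In this setting $E$ is merely the complete intersection of the two quadrics $q_4, q_5$ with no common factor, so it may be singular, reducible or non-reduced (e.g.\ a twisted cubic plus a secant line, two conics, four lines, a double conic), and one must genuinely rule out subsheaves $\E \subset \O_E$ with reduced Hilbert polynomial $\ge m$. This is the entire body of the paper's proof: it reduces to stable subsheaves with Hilbert polynomial $m$, $2m$, $3m$ and $3m+1$, and eliminates each case by comparing the known locally free resolutions of such sheaves (from \cite{freiermuth_trautmann}) with the Koszul resolution $0 \to \O(-4) \to 2\O(-2) \to \O \to \O_E \to 0$, deriving contradictions from the induced maps on kernels and cokernels. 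The $3m$ case in particular (a plane cubic inside $E$, which would have reduced Hilbert polynomial exactly $m$ and make $\O_E$ only semistable) requires the hypothesis that $q_4$ and $q_5$ have no common factor, and your proposal never uses that hypothesis --- a sign that the real content is missing. To complete your proof you would need to supply this case analysis (or an equivalent argument that no proper subcurve of such an $E$ has non-negative Euler characteristic in the relevant normalized sense).
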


\begin{proof}
We will show that $\O_E$ is stable, forcing $\O_E(P)$ to be stable.
To prove that $\O_E$ is stable, we must show that it does not contain a stable subsheaf $\E$ having one of the following Hilbert polynomials:
$m$, $m+1$ (i.e. the structure sheaf of a line), $2m$, $2m+1$ (i.e. the structure sheaf of a conic curve), $3m$, $3m+1$.
The structure sheaf of a line contains subsheaves having Hilbert polynomial $m$ and the structure sheaf of a conic curve contains subsheaves
having Hilbert polynomial $2m$. Thus, it is enough to consider only the Hilbert polynomials $m$, $2m$, $3m+1$, $3m$.
In the first case, we have a commutative diagram
\[
\xymatrix
{
0 \ar[r] & \O(-3) \ar[d]^-{\gamma} \ar[r] & 2\O(-2) \ar[r] \ar[d]^-{\beta} & \O(-1) \ar[r] \ar[d]^-{\alpha} & \E \ar[r] \ar[d] & 0 \\
0 \ar[r] & \O(-4) \ar[r] & 2\O(-2) \ar[r] & \O \ar[r] & \O_E \ar[r] & 0
}
\]
in which $\alpha \neq 0$. It follows that $\O(-3) \isom \Ker(\gamma) \isom \Ker(\beta)$, which is absurd.
In the second case, we get a commutative diagram
\[
\xymatrix
{
0 \ar[r] & 2\O(-3) \ar[d]^-{\gamma} \ar[r] & 4\O(-2) \ar[r] \ar[d]^-{\beta} & 2\O(-1) \ar[r] \ar[d]^-{\alpha} & \E \ar[r] \ar[d] & 0 \\
0 \ar[r] & \O(-4) \ar[r] & 2\O(-2) \ar[r] & \O \ar[r] & \O_E \ar[r] & 0
}
\]
in which $\alpha \neq 0$, hence $\Ker(\alpha) \isom \O(-1)$ or $\O(-2)$.
From the exact sequence
\[
0 \lra 2\O(-3) \isom \Ker(\gamma) \lra \Ker(\beta) \lra \Ker(\alpha) \lra \Coker(\gamma) \isom \O(-4)
\]
we see that $\Ker(\beta) \isom 3\O(-2)$ and we get the exact sequence
\[
0 \lra 2\O(-3) \lra 3\O(-2) \lra \Ker(\alpha) \lra 0.
\]
Such an exact sequence cannot exist.
In the third case, we use the resolution of $\E$ given at \cite[Theorem 1.1]{freiermuth_trautmann}.
We obtain a commutative diagram
\[
\xymatrix
{
0 \ar[r] & 2\O(-3) \ar[d]^-{\gamma} \ar[r] & 3\O(-2) \oplus \O(-1) \ar[r] \ar[d]^-{\beta} & \O(-1) \oplus \O \ar[r] \ar[d]^-{\alpha} & \E \ar[r] \ar[d] & 0 \\
0 \ar[r] & \O(-4) \ar[r] & 2\O(-2) \ar[r] & \O \ar[r] & \O_E \ar[r] & 0
}
\]
in which $\alpha$ is non-zero on global sections, hence $\Ker(\alpha) \isom \O(-1)$.
We obtain a contradiction from the exact sequence
\[
0 \lra 2\O(-3) \isom \Ker(\gamma) \lra \Ker(\beta_{11}) \oplus \O(-1) \lra \Ker(\alpha) \lra 0.
\]
Assume, finally, that $\E$ gives a stable point in $\M_{\PP^3}(3m)$. If $\H^0(\E) \neq 0$, then it is easy to see that $\E$ is the structure sheaf
of a planar cubic curve, hence we get a commutative diagram
\[
\xymatrix
{
0 \ar[r] & \O(-4) \ar[d]^-{\gamma} \ar[r] & \O(-3) \oplus \O(-1) \ar[r] \ar[d]^-{\beta} & \O \ar[r] \ar[d]^-{\alpha} & \E \ar[r] \ar[d] & 0 \\
0 \ar[r] & \O(-4) \ar[r] & 2\O(-2) \ar[r] & \O \ar[r] & \O_E \ar[r] & 0
}
\]
in which $\alpha$ is injective. We get a contradiction from the fact that $\O(-1)$ is a subsheaf of $\Ker(\beta) \isom \Ker(\gamma)$.
If $\H^0(\E) = 0$, then we get a commutative diagram of the form
\[
\xymatrix
{
0 \ar[r] & 3\O(-3) \ar[d]^-{\gamma} \ar[r] & 6\O(-2) \ar[r] \ar[d]^-{\beta} & 3\O(-1) \ar[r] \ar[d]^-{\alpha} & \E \ar[r] \ar[d] & 0 \\
0 \ar[r] & \O(-4) \ar[r] & 2\O(-2) \ar[r] & \O \ar[r] & \O_E \ar[r] & 0
}
\]
It is easy to see that $\alpha(1)$ is injective on global sections, hence $\Coker(\alpha)$ is isomorphic to the structure sheaf of a point
and $\Coker(\beta) \isom \O(-2)$. We get a contradiction from the exact sequence
\[
\O(-4) \isom \Coker(\gamma) \lra \Coker(\beta) \lra \Coker(\alpha). \qedhere
\]
\end{proof}

To finish the discussion about sheaves at Theorem \ref{homological_conditions} (i), we need to examine the case when
$q_4 = u v_1$ and $q_5 = u v_2$ with linearly independent $v_1, v_2 \in V^*$.
Let $H$ be the plane given by the equation $u = 0$ and $L$ the line given by the equations $v_1 = 0$, $v_2 = 0$.
We apply the snake lemma to the diagram
\[
\xymatrix
{
 & & 0 \ar[d] & & 0 \ar[d] \\
 0 \ar[r] & \O(-3) \ar[r] & 2\O(-2) \ar[rr]^-{[v_1 \ v_2]} \ar[d] & &
 \O(-1) \ar[r] \ar[d]^-{\tiny \left[ \!\! \ba{c} 0 \\ u \ea \!\! \right]} & \O_L(-1) \ar[r] & 0 \\
 0 \ar[r] & 3\O(-3) \ar[r] & 5\O(-2) \ar[rr]^-{\f} \ar[d] & & \O(-1) \oplus \O \ar[r] \ar[d] & \F \ar[r] & 0 \\
 0 \ar[r] & \mathcal{K} \ar[r] & 3\O(-2) \ar[rr]^-{\tiny \left[ \!\! \ba{ccc} l_1 & l_2 & l_3 \\ \star & \star & \star \ea \!\! \right]} \ar[d] & &
 \O(-1) \oplus \O_H \ar[r] \ar[d] & \G \ar[r] & 0 \\
  & & 0 & & 0
}
\]
The kernel of the canonical map $\G \to \CC_P$ is an $\O_H$-module.
This shows that $\F$ is not isomorphic to $\G$, otherwise, in view of Remark \ref{planar_sheaves}, $\F$ would be planar.
Thus $\O_L(-1) \to \F$ is non-zero, hence it is injective.
We get a non-split extension
\begin{equation}
\label{sheaves_in_D}
0 \lra \O_L(-1) \lra \F \lra \G \lra 0
\end{equation}
and it becomes clear that $P \in H$ and that $\G$ gives a point in $\M_{\PP^3} (3m+1)$.
From Remark \ref{planar_sheaves} we see that $\G$ gives a point in $\M_H (3m+1)$.
Thus, $\G$ is the unique non-split extension of $\CC_P$ by $\O_C$ for a cubic curve $C \subset H$ containing $P$.
We write $\G = \O_C(P)$.
Let $\DD \subset \M_{\PP^3}(4m+1)$ be the set of non-split extension sheaves as in (\ref{sheaves_in_D})
that are non-planar (we allow the possibility that $L \subset H$, in which case the support of $\F$ is contained in the double plane $2H$).

We examine first the case when $L \nsubseteq H$, that is, $L$ meets $C$ with multiplicity $1$, at a point $P'$.
According to \cite[Theorem 1.1]{freiermuth_trautmann} there is a resolution
\begin{equation}
\label{planar_3m+1}
0 \lra 2\O(-3) \stackrel{\delta}{\lra} 3\O(-2) \oplus \O(-1) \stackrel{\gamma}{\lra} \O(-1) \oplus \O \lra \G \lra 0
\end{equation}
\[
\delta = \left[
\ba{ll}
\phantom{-} u & \phantom{-} 0 \\
\phantom{-} 0 & \phantom{-} u \\
-u_1 & -u_2 \\
-g_1 & -g_2
\ea
\right], \qquad \gamma = \left[
\ba{cccc}
u_1 & u_2 & u & 0 \\
g_1 & g_2 & 0 & u
\ea
\right]
\]
where $\operatorname{span} \{ u_1, u_2, u \} = \operatorname{span} \{ l_1, l_2, l_3 \}$
and $C$ has equation $u_1 g_2 - u_2 g_1 = 0$ in $H$.
Note that $\G_{| L} \isom \CC_{P'}$ unless $\gamma(P') = 0$, in which case $\G_{| L} \isom \CC_{P'} \oplus \CC_{P'}$.
But $\gamma(P') = 0$ if and only if $P' = P \in \sing(C)$.
From (\ref{ext_sequence_1}) we have the exact sequence
\[
0 \to \Ext^1_{\O_L} (\G_{| L}, \O_L(-1)) \to \Ext^1_{\O_{\PP^3}} (\G, \O_L(-1)) \to
\Hom_{\O_L} (\Tor_1^{\O_{\PP^3}}(\G, \O_L), \O_L(-1)).
\]
The group on the right vanishes because $\O_L(-1)$ has no zero-dimensional torsion.
It follows that
\[
\Ext^1_{\O_{\PP^3}} (\G, \O_L(-1)) \isom
\begin{cases}
\CC & \text{if $P \neq P'$ or if $P = P' \in \reg(C)$}, \\
\CC^2 & \text{if $P = P' \in \sing(C)$}.
\end{cases}
\]
Let $\DD_0 \subset \DD$ be the open subset given by the conditions that $L \nsubseteq H$
and either $P \neq P'$ or $P = P' \in \reg(C)$. The map
\[
\DD_0 \lra \Hilb_{\PP^3}(m+1) \times \M_{\PP^3}(3m+1), \qquad [\F] \longmapsto (L, [\G])
\]
is injective and has irreducible image.
We deduce that $\DD_0$ is irreducible and has dimension $16$.

Let $\DD' \subset \M_{\PP^3}(4m+1)$ be the subset of non-split extensions (\ref{sheaves_in_D'}).
Denote $P = L \cap C$.
From (\ref{ext_sequence_1}) we have the exact sequence
\[
0 \to \CC \isom \Ext^1_{\O_H}(\CC_{P}, \O_C) \to \Ext^1_{\O_{\PP^3}}(\O_L, \O_C) \to
\Hom_{\O_H}(\Tor_1^{\O_{\PP^3}}(\O_L, \O_H), \O_C) = 0.
\]
We deduce that, given $L$ and $C$, there is a unique non-split extension of $\O_L$ by $\O_C$.
The map
\[
\DD' \lra \Hilb_{\PP^3}(m+1) \times \Hilb_{\PP^3}(3m)
\]
sending $\F$ to $(L, C)$ is injective and has irreducible image.
We deduce that $\DD'$ is irreducible and has dimension $15$.
Tensoring (\ref{sheaves_in_D'}) with $\O_H$ we get the exact sequence
\[
0 = \Tor_1^{\O_{\PP^3}} (\O_L, \O_H) \lra \O_C \lra \F_{| H} \lra \CC_{P} \lra 0
\]
from which we see that $\F_{| H} \isom \O_C(P)$. We obtain the extension
\[
0 \lra \O_L(-1) \lra \F \lra \O_C(P) \lra 0.
\]
We deduce that $[\F] \in \DD$.
Thus, $\DD' \subset \DD$. Moreover, $\DD' \cap \DD_0$ is open and non-empty in $\DD'$ because it consists precisely
of extensions as above for which $P \in \reg(C)$.
Thus, $\DD' \subset \overline{\DD}_0$.

\begin{remark}
Note that $\DD_0 \setminus \DD'$ is the open subset of $\DD$ given by the conditions $L \nsubseteq H$ and $P \neq P'$.
We claim that $\DD_0 \setminus \DD'$ is the set of sheaves of the form $\O_D(P)$, where $D = L \cup C$ is the union
of a line and a planar cubic curve having intersection of multiplicity $1$ and $P \in C \setminus L$.
First we show that the notation $\O_D(P)$ is justified.
From (\ref{ext_sequence_1}) we have the exact sequence
\begin{multline*}
0 \lra \CC \isom \Ext^1_{\O_L}(\CC_{P'}, \O_L(-1)) \lra \Ext^1_{\O_{\PP^3}}(\O_C, \O_L(-1)) \\
\lra \Hom(\Tor_1^{\O_{\PP^3}}(\O_C, \O_L), \O_L(-1)) = 0
\end{multline*}
which shows that $\O_D$ is the unique non-split extension of $\O_C$ by $\O_L(-1)$.
The long exact sequence of groups
\begin{multline*}
0 = \Ext^1_{\O_{\PP^3}}(\CC_P, \O_L(-1)) \lra \Ext^1_{\O_{\PP^3}}(\CC_P, \O_D) \lra
\Ext^1_{\O_{\PP^3}}(\CC_P, \O_C) \isom \CC \\
\lra \Ext^2_{\O_{\PP^3}} (\CC_P, \O_L(-1)) = 0
\end{multline*}
shows that there is a unique non-split extension of $\CC_P$ by $\O_D$, which we denote by $\O_D(P)$.
Given $\F \in \DD_0 \setminus \DD'$, the pull-back of $\O_C$ in $\F$, denoted $\F'$, is a non-split extension of $\O_C$ by $\O_L(-1)$.
Indeed, if $\F'$ were a split extension, then $\O_C \subset \F$ and $\F/\O_C \isom \O_L(-1) \oplus \CC_P$,
so $\O_L(-1)$ would be a destabilising quotient sheaf of $\F$. Thus $\F' \isom \O_D$ and $\F \isom \O_D(P)$.
Conversely, $\O_D(P) / O_L(-1)$ is an extension of $\CC_P$ by $\O_C$, hence $\O_D(P) / O_L(-1) \isom \O_C(P)$.
\end{remark}

\begin{remark}
\label{no_extensions}
If $L \cap C = \{ P \}$ is a regular point of $C$, and $D = L \cup C$, then
there are no semi-stable extensions of the form
\[
0 \lra \O_D \lra \F \lra \CC_P \lra 0.
\]
Indeed, if $\F$ were such a semi-stable extension, then we would also have an extension
\[
0 \lra \O_L(-1) \lra \F \lra \G \lra 0
\]
where $\G$ is an extension of $\CC_P$ by $\O_C$. Note that $\G$ is a non-split extension, otherwise $\O_C$
would be a destabilizing quotient sheaf of $\F$.
Thus $\F$ is the unique non-split extension of $\O_C(P)$ by $\O_L(-1)$,
so it is also the unique non-split extension of $\O_L$ by $\O_C$.
Thus $\H^0(\F)$ generates $\O_C$, hence $\O_D$ is a subsheaf of $\O_C$, which is absurd.
\end{remark}

\begin{remark}
\label{S_irreducible}
The set $\mathbf{S} \subset \M_{\PP^2}(3m) \times \M_{\PP^2}(3m+1)$ of pairs $([\E], [\G])$
such that $\H^0(\E) = 0$ and $\E$ is a subsheaf of $\G$ is irreducible.
By duality, this is equivalent to saying that the set $\mathbf{S}^\dual \subset \M_{\PP^2}(3m-1) \times \M_{\PP^2}(3m)$
of pairs $([\G], [\E])$ such that $\H^0(\E) = 0$ and $\G$ is a subsheaf of $\E$ is irreducible.
Given an exact sequence
\[
0 \lra \G \lra \E \lra \CC_{P'} \lra 0
\]
we may combine the resolutions of sheaves on $\PP^2$
\[
0 \lra \O(-3) \oplus \O(-2) \xrightarrow{\tiny \left[ \!\! \ba{cc} q_1 & \!\!\! u_1 \\ q_2 & \!\!\! u_2 \ea \!\! \right]} 2\O(-1) \lra \G \lra 0
\]
and
\[
0 \lra \O(-3) \lra 2\O(-2) \xrightarrow{\tiny \left[ \!\! \ba{cc} v_1 & \!\!\! v_2 \ea \!\! \right]} \O(-1) \lra \CC_{P'} \lra 0
\]
to form the resolution
\[
0 \lra \O(-3) \stackrel{\psi}{\lra} \O(-3) \oplus 3\O(-2) \stackrel{\f}{\lra} 3\O(-1) \lra \E \lra 0,
\]
\[
\f = \left[
\ba{cccc}
q_1 & u_1 & l_{11} & l_{12} \\
q_2 & u_2 & l_{21} & l_{22} \\
0 & 0 & v_1 & v_2
\ea
\right].
\]
We indicate by the index $i$ the maximal minor of a matrix obtained by deleting column $i$.
The condition $\H^0(\E) = 0$ is equivalent to the condition $\psi_{11} \neq 0$, which is equivalent to the following conditions:
$\f_1 \neq 0$ and $\f_1$ divides $\f_2$, $\f_3$, $\f_4$.
As $\f_1$ divides both $(q_1 u_2 - u_1 q_2) v_1$ and $(q_1 u_2 - u_1 q_2) v_2$, we see that $\f_1$ is a multiple of $q_1 u_2 - u_1 q_2$.
It follows that $\f$ is equivalent to the matrix
\[
\upsilon = \left[
\ba{cccc}
l_{11} v_2 - l_{12} v_1 & u_1 & l_{11} & l_{12} \\
l_{21} v_2 - l_{22} v_1 & u_2 & l_{21} & l_{22} \\
0 & 0 & v_1 & v_2
\ea
\right].
\]
Let $U \subset \Hom(\O(-3) \oplus 3\O(-2), 3\O(-1))$ be the set of morphisms represented by matrices
$\upsilon$ as above satisfying the following conditions:
$\upsilon_1 \neq 0$, $u_1$ and $u_2$ are linearly independent, $v_1$ and $v_2$ are linearly independent.
Clearly, $U$ is irreducible.
Let $\upsilon' \in \Hom(\O(-3) \oplus \O(-2), 2\O(-1))$ be the morphism represented by the matrix
\[
\left[
\ba{cc}
l_{11} v_2 - l_{12} v_1 & u_1 \\
l_{21} v_2 - l_{22} v_1 & u_2
\ea
\right].
\]
The above discussion shows that the map $\pi \colon U \to \mathbf{S}^\dual$, $\upsilon \mapsto ([\Coker(\upsilon')], [\Coker(\upsilon)])$
is surjective. Thus, $\mathbf{S}^\dual$ is irreducible.
The open subset $\mathbf{S}_\irr \subset \mathbf{S}$, given by the condition that the schematic support of $\G$ be irreducible,
is irreducible. 
\end{remark}

Let $\DD_1 \subset \DD$ be the locally closed subset given by the conditions $L \nsubseteq H$ and $P = P' \in \sing(C)$.
Since $\dim \Ext^1_{\O_{\PP^3}}(\G, \O_L(-1)) = 2$, we see that $\dim \DD_1 = 14$.
The set of cubic curves in $\PP^2$ that are singular at a fixed point is irreducible. It follows that $\DD_1$ is irreducible, as well.

\begin{proposition}
\label{D_1_in_D_0}
The set $\DD_1$ is contained in the closure of $\DD_0$.
\end{proposition}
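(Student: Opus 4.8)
The plan is to reduce to the dense open subset of $\DD_1$ lying in $\EE$, and then to deform inside the irreducible family of resolutions of type (\ref{sheaves_in_E}) whose last two columns share a linear factor.

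First I would show that $\DD_1 \setminus \DD'$ is dense in $\DD_1$ and consists of sheaves in $\EE$. For the density: a sheaf $\F$ in $\DD_1 \cap \DD'$ has $L \cap C = \{ P \}$ with $P \in \sing(C)$, so, using the injection $\DD' \hookrightarrow \Hilb_{\PP^3}(m+1) \times \Hilb_{\PP^3}(3m)$, $[\F] \mapsto (L, C)$, constructed above, $\DD_1 \cap \DD'$ maps into the locus of pairs $(L, C)$ with $C$ singular at $L \cap C$, which has dimension at most $13$; since $\dim \DD_1 = 14$, the set $\DD_1 \cap \DD'$ is a proper closed subset of $\DD_1$. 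For the second point, recall from Proposition \ref{non-planar_M_1} that the non-planar sheaves in $\MM_1$ are precisely those in $\DD'$; hence an $\F$ in $\DD_1 \setminus \DD'$ lies in $\MM_0$ (being non-planar it is not in $\MM_2$, and it is not in $\MM_1$), and since its support $L \cup C$ is reducible, $\F$ is not the structure sheaf of a rational quartic curve, so $\F \in \EE$. The same argument gives $\DD_0 \setminus \DD' = \DD_0 \cap \MM_0 = \DD_0 \cap \EE$, a nonempty open, hence dense, subset of the irreducible $\DD_0$. Therefore $\DD_1 \subseteq \overline{\DD_1 \setminus \DD'}$, and it is enough to prove $\DD_1 \cap \EE \subseteq \overline{\DD_0}$.

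Let $\mathbf T$ be the irreducible variety of tuples $(l_1, l_2, l_3, q_1, q_2, q_3, u, v_1, v_2)$ consisting of linear forms $l_1, l_2, l_3, u, v_1, v_2$ with $l_1, l_2, l_3$ linearly independent and $v_1, v_2$ linearly independent, and quadratic forms $q_1, q_2, q_3$; to such a tuple associate the morphism $\f$ of type (\ref{sheaves_in_E}) with $q_4 = u v_1$ and $q_5 = u v_2$. On the open subset $\mathbf T' \subseteq \mathbf T$ where $\Coker(\f)$ is semi-stable with Hilbert polynomial $4m+1$ the cokernels form a flat family, inducing a morphism $\rho \colon \mathbf T' \to \MM$. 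Every sheaf in $\DD \cap \EE$ has a resolution of this shape: it is in $\EE$, so has a resolution of type (\ref{sheaves_in_E}), and in $\DD$, so $q_4$ and $q_5$ cannot be coprime, as otherwise the sheaf would be $\O_E(P)$ for an elliptic quartic curve $E$, contradicting the reducible support. Thus $\rho$ is defined at a preimage of every point of $\DD_0 \cap \EE$ and of $\DD_1 \cap \EE$. Let $\mathbf T_0' \subseteq \mathbf T'$ be the locus where $u, v_1, v_2$ are linearly independent (i.e. $L \not\subseteq H$) and it is not the case that the point with ideal $(l_1, l_2, l_3)$ both lies on $L \cap C$ and is a singular point of $C$; here $C$ and this point depend algebraically on $\f$ through the snake-lemma construction of (\ref{sheaves_in_D}) and the resolution (\ref{planar_3m+1}), so $\mathbf T_0'$ is open, and it is nonempty because $\DD_0 \cap \EE \neq \emptyset$, hence dense in the irreducible $\mathbf T'$. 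One checks that $\rho(\mathbf T_0') = \DD_0 \cap \EE$, which is dense in $\DD_0$, so $\overline{\rho(\mathbf T_0')} = \overline{\DD_0}$; and any point of $\DD_1 \cap \EE$ has a preimage in $\mathbf T' \subseteq \overline{\mathbf T_0'}$, so it maps into $\rho(\overline{\mathbf T_0'}) \subseteq \overline{\rho(\mathbf T_0')} = \overline{\DD_0}$. Hence $\DD_1 \cap \EE \subseteq \overline{\DD_0}$, and combined with the previous paragraph, $\DD_1 \subseteq \overline{\DD_0}$.

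The step I expect to be the main obstacle is the assertion that $\mathbf T_0'$ is open, that is, that the cubic $C$ — equivalently the planar part of $\Coker(\f)$ — and hence the conditions "the point with ideal $(l_1,l_2,l_3)$ lies on $L \cap C$" and "it is a singular point of $C$" vary algebraically with $\f$. I would establish this by making the snake-lemma derivation of the extension (\ref{sheaves_in_D}) relative over $\mathbf T'$, realizing $\O_C$ as a quotient of a sheaf flat over a suitable open subset of $\mathbf T'$ and shrinking $\mathbf T'$ if necessary so that this is uniform. The remaining points — flatness of the cokernel family defining $\rho$, the identifications $\DD_i \cap \EE = \DD_i \cap \MM_0$, and the dimension bound for $\DD_1 \cap \DD'$ — are routine.
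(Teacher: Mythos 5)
The reduction in your first paragraph is sound: $\DD_1 \cap \DD'$ is a closed subset of the irreducible $14$-dimensional $\DD_1$ of dimension at most $13$, so it suffices to put $\DD_1 \cap \EE$ into $\overline{\DD}_0$. The genuine gap is in the second paragraph, and it is not the step you flagged. Your argument hinges on $\mathbf T'$ being irreducible, which you justify only by calling it ``the open subset of $\mathbf T$ where $\Coker(\f)$ is semi-stable with Hilbert polynomial $4m+1$.'' That locus is neither open nor dense in $\mathbf T$: the cokernel of
\[
\f = \left[\ba{ccccc} l_1 & l_2 & l_3 & 0 & 0 \\ q_1 & q_2 & q_3 & uv_1 & uv_2 \ea\right]
\]
is supported exactly where all $2 \times 2$ minors vanish, i.e.\ on $\{l_1 = l_2 = l_3 = 0\}$ together with the intersection of $\{u = 0\} \cup \{v_1 = v_2 = 0\}$ with the determinantal locus where $(q_1, q_2, q_3)$ is proportional to $(l_1, l_2, l_3)$; for a generic tuple in $\mathbf T$ this is a finite set of points, so the generic cokernel is zero-dimensional. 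Hence $\mathbf T'$ is only a locally closed subset lying inside a proper closed subset of $\mathbf T$ (the $P = 4m+1$ flattening stratum intersected with the semi-stable locus), and such a subset of an irreducible variety need not be irreducible. Since $\rho(\mathbf T') = \DD \cap \EE$, proving $\mathbf T'$ irreducible is essentially equivalent to proving that $\DD \cap \EE$ is irreducible, which is (more than) the statement you are trying to establish; at this point the argument is circular. Without irreducibility of $\mathbf T'$ the inclusion $\mathbf T' \subseteq \overline{\mathbf T_0'}$, on which everything rests, has no justification.

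This is precisely the difficulty the paper's proof is designed to circumvent. The obstruction is that $\dim \Ext^1_{\O_{\PP^3}}(\O_C(P), \O_L(-1))$ jumps from $1$ on $\DD_0$ to $2$ on $\DD_1$, so neither the naive extension parametrization nor a resolution parametrization produces an evidently irreducible family covering both strata. The paper instead rewrites each $\F \in (\DD_0 \cup \DD_1) \setminus \DD'$ as an extension $0 \to \E \to \F \to \O_L \to 0$ with $\E = ((\F^\dual)_{|H})^\dual$, observes that $\Ext^1_{\O_{\PP^3}}(\O_L, \E) \isom \Ext^1_{\O_H}(\CC_{P'}, \E)$ uniformly (no jump), and thereby identifies the open subset $\DD''$ where $C$ is irreducible with the set $\mathbf S''$ of triples $(L, [\E], [\G])$, whose irreducibility is reduced via Remark \ref{S_irreducible} to an explicit matrix computation on $\PP^2$. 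Some input of this kind is unavoidable, and your proposal does not supply it. (Two smaller imprecisions: a sheaf in $\RR$ can be the structure sheaf of a reducible quartic of arithmetic genus $0$, so ``reducible support'' alone does not exclude $\RR$ --- the correct argument is that $\F$ surjects onto $\O_C(P)$, which is not globally generated; and a complete intersection of two quadrics can be reducible, so coprimality of $q_4, q_5$ must be excluded by noting that no curve-component complete intersection of quadrics contains a planar cubic.)
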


\begin{proof}
Consider $[\F] \in \DD_0 \cup \DD_1$. Consider extension (\ref{sheaves_in_D}) in which $\G = \O_C(P)$ and $L \cap H = \{ P'\}$.
Dualizing we get the extension
\[
0 \lra \O_C(-P) \lra \F^\dual \lra \O_L(-1) \lra 0.
\]
Tensoring with $\O_H$ we get the exact sequence
\[
0 = \Tor_1^{\O_{\PP^3}} (\O_L(-1), \O_H) \lra \O_C(-P) \lra (\F^\dual)_{| H} \lra \CC_{P'} \lra 0.
\]
This short exact sequence does not split.
Indeed, by \cite{maican_duality}, $\F^\dual$ is stable and has slope $-1/4$, hence $\O_C(-P)$, which has slope $-1/3$,
cannot be a quotient sheaf of $\F^\dual$.
Since $\O_C(-P)$ is stable, it is easy to see that $(\F^\dual)_{| H}$ gives a sheaf in $\M_H(3m)$ supported on $C$.
The kernel of the map $\F^\dual \to (\F^\dual)_{| H}$ is supported on $L$ and has no zero-dimensional torsion,
hence it is isomorphic to $\O_L(-2)$.
Denote $\E = ((\F^\dual)_{| H})^\dual$. Dualizing the exact sequence
\[
0 \lra \O_L(-2) \lra \F^\dual \lra (\F^\dual)_{| H} \lra 0
\]
we obtain the extension
\begin{equation}
\label{E-F-O_L}
0 \lra \E \lra \F \lra \O_L \lra 0.
\end{equation}
Tensoring with $\O_H$, and taking into account the fact that $\Tor_1^{\O_{\PP^3}}(\O_L, \O_H) = 0$, we get the exact sequence
\begin{equation}
\label{E-O_C(P)}
0 \lra \E \lra \O_C(P) \lra \CC_{P'} \lra 0.
\end{equation}
From (\ref{ext_sequence_1}) we have the exact sequence
\[
0 \lra \Ext^1_{\O_H} (\CC_{P'}, \E) \stackrel{\epsilon}{\lra} \Ext^1_{\O_{\PP^3}}(\O_L, \E) \lra \Hom (\Tor_1^{\O_{\PP^3}}(\O_L, \O_H), \E) = 0.
\]
It is clear now that the isomorphism class of $\F$ corresponds to the isomorphism class of $\O_C(P)$ under the bijective map $\epsilon$.
Let $\DD'' \subset (\DD_0 \cup \DD_1) \setminus \DD'$ be the subset given by the condition that $C$ be irreducible.
Note that $\DD''$ is an open subset of $\DD$ and contains an open subset of $\DD_1$.
We will prove below that $\DD''$ is irreducible.
Since $\DD_1$ is irreducible, we arrive at the conclusion of the proposition:
\[
\DD_1 \subset \overline{\DD'' \cap \DD}_1 \subset \overline{\DD}{}'' = \overline{\DD'' \cap \DD}_0 \subset \overline{\DD}_0.
\]
Consider the subset
\[
\mathbf{S}'' \subset \Hilb_{\PP^3}(m+1) \times \M_{\PP^3}(3m) \times \M_{\PP^3}(3m+1)
\]
of triples $(L, [\E], [\G])$ satisfying the following conditions: $\E$ and $\G$ are supported on a planar irreducible cubic curve $C$,
$\H^0(\E) = 0$, $\E$ is a subsheaf of $\G$, and $L \cap C = \{ P' \}$, where $\CC_{P'} \isom \G/\E$.
Note that the projection $\mathbf{S}'' \to \M_{\PP^3}(3m) \times \M_{\PP^3}(3m+1)$ has fibers affine planes and has image
the irreducible variety  $\mathbf{S}_\irr$ from Remark \ref{S_irreducible}.
It follows that $\mathbf{S}''$ is irreducible.
To prove that $\DD''$ is irreducible, we will show that the morphism
\[
\eta \colon \DD'' \lra \mathbf{S}'', \qquad
\eta ([\F]) = (L, [((\F^\dual)_{| H})^\dual], [\F_{| H}])
\]
is bijective. We first verify surjectivity. Given an extension
\[
0 \lra \E \lra \G \lra \CC_{P'} \lra 0
\]
we let $\F \in \Ext^1_{\O_{\PP^3}} (\O_L, \E)$ be the image of $\G$ under $\epsilon$.
Since $\G$ does not split, neither does $\F$.
By hypothesis $\E$ has irreducible support, hence $\E$ is stable, and, a fortiori, $\F$ is stable.
Applying the snake lemma to the diagram
\[
\xymatrix
{
0 \ar[r] & \E \egal[d] \ar[r] & \F \ar[r] \ar[d] & \O_L \ar[d] \ar[r] & 0 \\
0 \ar[r] & \E \ar[r] & \G \ar[r] & \CC_{P'} \ar[r] & 0
}
\]
we get the extension
\[
0 \lra \O_L(-1) \lra \F \lra \G \lra 0.
\]
Thus, $[\F] \in \DD_0 \cup \DD_1$ and $\F_{| H} \isom \G$, where $H$ is the plane containing $C$.
Dualizing the first row of the above diagram we see that $(\F^\dual)_{| H} \isom \E^\dual$.
By hypothesis $\E$ is not isomorphic to $\O_C$, hence $[\F] \notin \DD'$.
Thus $[\F] \in \DD''$ and $\eta ([\F]) = (L, [\E], [\G])$.
This proves that $\eta$ is surjective.
Since $[\F] = \epsilon ([\G])$ we see that $\eta$ is also injective.
\end{proof}

\noindent
We will next examine the sheaves in $\DD$ for which $L \subset H$.
From (\ref{ext_sequence_2}) we have the exact sequence
\begin{multline*}
0 \lra \Ext^1_{\O_H} (\O_C(P), \O_L(-1)) \lra \Ext^1_{\O_{\PP^3}} (\O_C(P), \O_L(-1)) \\
\lra \Hom(\O_C(P)(-1), \O_L(-1)) \\
\lra \Ext^2_{\O_H} (\O_C(P), \O_L(-1)) \isom \Hom_{\O_H}^{}(\O_L(-1), \O_C(P)(-3))^* = 0.
\end{multline*}
Thus, we have non-planar sheaves precisely if $\Hom (\O_C(P), \O_L) \neq 0$.
Any non-zero morphism $\alpha \colon \O_C(P) \to \O_L$ fits into a commutative diagram
\[
\xymatrix
{
0 \ar[r] & 2\O_H(-2) \ar[r]^-{\upsilon} \ar[d]^-{\gamma} & \O_H(-1) \oplus \O_H \ar[d]^-{\beta} \ar[r] & \O_C(P) \ar[r] \ar[d]^-{\alpha} & 0 \\
0 \ar[r] & \O_H(-1) \ar[r]^-{l} & \O_H \ar[r] & \O_L \ar[r] & 0
}
\]
\[
\beta = \left[
\ba{cc}
v & c
\ea
\right], \qquad \gamma = \left[
\ba{cc}
v_1 & v_2
\ea
\right], \qquad \upsilon = \left[
\ba{cc}
u_1 & u_2 \\
g_1 & g_2
\ea
\right]
\]
with $\beta \neq 0$.
Note that $c \neq 0$, otherwise $\Coker(\beta)$ would be the structure sheaf of a line
and we would have the relation $(vu_1, vu_2) = (lv_1, lv_2)$.
Thus $v_1$ and $v_2$ would be linearly independent, hence $\Coker(\gamma)$ would be zero-dimensional,
and hence $\Coker(\beta)$ would be zero-dimensional, which is absurd.
Replacing, possibly, $\upsilon$ with an equivalent matrix, we may assume that $g_1$ and $g_2$ are divisible by $l$.
Conversely, if $\O_C(P)$ is the cokernel of the morphism
\[
\upsilon = \left[
\ba{cc}
u_1 & u_2 \\
l v_1 & l v_2
\ea
\right], \qquad \text{then, denoting} \qquad \upsilon' = \left[
\ba{cc}
u_1 & u_2 \\
v_1 & v_2
\ea
\right],
\]
we can apply the snake lemma to the commutative diagram
\begin{equation}
\label{upsilon_diagram}
\xymatrix
{
 & 2\O_H(-2) \egal[r] \ar[d]^-{\upsilon'} & 2\O_H(-2) \ar[d]^-{\upsilon} \\
 0 \ar[r] & 2\O_H(-1) \ar[r]^-{1 \oplus l} & \O_H(-1) \oplus \O_H \ar[r] & \O_L \ar[r] & 0
}
\end{equation}
to get a surjective map $\O_C(P) \to \O_L$. This discussion shows that $\Hom(\O_C(P), \O_L)$ does not vanish
precisely if $C = L \cup C'$ for a conic curve $C' \subset H$ and for $P \in C'$.
In this case we have a commutative diagram
\[
\xymatrix
{
\Hom(\O_C, \O_L(-1)) = 0 \ar[d]
\\
\Ext^1_{\O_H} (\CC_P, \O_L(-1)) \ar[d] & & \Hom(\CC_P, \O_L) = 0 \ar[d]
\\
\Ext^1_{\O_H} (\O_C(P), \O_L(-1)) \ar@{^(->}[r]  \ar[d] &
\Ext^1_{\O_{\PP^3}} (\O_C(P), \O_L(-1)) \ar@{->>}[r] \ar[d]^-{\delta} & \Hom(\O_C(P), \O_L) \ar[d]^-{\isom}
\\
\Ext^1_{\O_H} (\O_C, \O_L(-1)) \ar@{^(->}[r] \ar[d] & \Ext^1_{\O_{\PP^3}} (\O_C, \O_L(-1)) \ar@{->>}[r] & \Hom(\O_C, \O_L) \isom \CC 
\\
\Ext^2_{\O_H} (\CC_P, \O_L(-1)) \ar[r]^-{\isom} \ar[d] & \Hom_{\O_H}^{} (\O_L, \CC_P)^*
\\
\Ext^2_{\O_H} (\O_C(P), \O_L(-1)) = 0 
}
\] 
Here $\delta(\F)$ is the pull-back of $\O_C$ in $\F$.
If $P \notin L$, then $\delta$ is an isomorphism.
If $P \in L$, then we have an exact sequence
\[
0 \lra \CC \lra \Ext^1_{\O_{\PP^3}} (\O_C(P), \O_L(-1)) \stackrel{\delta}{\lra} \Ext^1_{\O_{\PP^3}} (\O_C, \O_L(-1)) \lra \CC \lra 0.
\]
If $\F$ is non-planar, then $\delta(\F)$ is generated by a global section.
Indeed, in view of Proposition \ref{non-planar_M_1}, $\F$ cannot have resolution (\ref{sheaves_in_M_1}),
so it has resolution (\ref{sheaves_in_R}) or (\ref{sheaves_in_E}).
Also, $\F$ is not generated by a global section because $\O_C(P)$ is not generated by a global section.
It follows that $P_{\F'}(m) = 4m$, where $\F' \subset \F$ is the subsheaf generated by $\H^0(\F)$.
But $\F'$ maps to $\O_C$, hence $\delta(\F) \subset \F'$.
These two sheaves have the same Hilbert polynomial, so they coincide.
We conclude that $\delta(\F)$ is the structure sheaf $\O_D$ of a quartic curve $D$.
If $P \notin L$, then $\F \isom \O_D(P)$.

Assume now that $P \in L$. The preimage of $[\O_D]$ under the induced map
\[
\PP \big( \Ext^1_{\O_{\PP^3}}(\O_C(P), \O_L(-1)) \big) \setminus \PP(\CC) \lra \PP \big( \Ext^1_{\O_{\PP^3}}(\O_C, \O_L(-1)) \big)
\]
is an affine line that maps to a curve in $\M_{\PP^3}(4m+1)$. The exact sequence
\begin{multline*}
0 = \Hom(\CC_P, \O_C) \lra \Ext^1_{\O_{\PP^3}} (\CC_P, \O_L(-1)) \isom \CC \lra \Ext^1_{\O_{\PP^3}} (\CC_P, \O_D) \\
\lra \Ext^1_{\O_{\PP^3}} (\CC_P, \O_C) \isom \CC
\end{multline*}
shows that $\Ext^1_{\O_{\PP^3}} (\CC_P, \O_D)$ has dimension $2$. Indeed, if this vector space had dimension $1$,
then its image in $\M_{\PP^3} (4m+1)$ would be a point. This, we saw above, is not the case.

Let $\DD_2 \subset \DD$ be the closed subset given by the condition $L \subset H$.
Equivalently, $\DD_2$ is given by the condition $C = L \cup C'$ and $P \in C'$ for a conic curve $C'$.
According to \cite[Proposition 4.10]{choi_chung_maican}, the set $\DD_2$ is irreducible of dimension $14$.
Indeed, let
\begin{equation}
\label{set_S}
\mathbf{S} \subset \Hilb_{\PP^2}(m+1) \times \M_{\PP^2}(3m+1)
\end{equation}
be the locally closed subset of pairs $(L, [\O_C(P)])$ for which $C = L \cup C'$ and $P \in C'$, for a conic curve $C' \subset \PP^2$.
According to \cite[Lemma 4.9]{choi_chung_maican}, $\mathbf{S}$ is irreducible.
The canonical map $\DD_2 \to \mathbf{S}$ is surjective and its fibers are irreducible of dimension $3$.


\section{The irreducible components}
\label{components}

Let
\[
\WW_0 \subset \Hom(3\O(-3), 5\O(-2)) \times \Hom(5\O(-2), \O(-1) \oplus \O)
\]
be the subset of pairs of morphisms equivalent to pairs $(\psi, \f)$ occurring in resolutions (\ref{sheaves_in_R}) and (\ref{sheaves_in_E}).
We claim that $\WW_0$ is locally closed.
To see this, consider first the locally closed subset $\mathbb{W}$ given by the following conditions:
$\psi$ is injective, $\f$ is generically surjective, $\f \circ \psi = 0$.
We have the universal sequence
\[
3\O_{\mathbb{W} \times \PP^3} (-3) \stackrel{\Psi}{\lra} 5\O_{\mathbb{W} \times \PP^3} (-2) \stackrel{\Phi}{\lra}
\O_{\mathbb{W} \times \PP^3}(-1) \oplus \O_{\mathbb{W} \times \PP^3}.
\]
Denote $\widetilde{\F} = \Coker(\Phi)$.
Corresponding to the polynomial $P(m) = 4m+1$ we have the locally closed subset
\[
\mathbb{W}_P = \{ x \in \mathbb{W},\ P_{\widetilde{\F}_x} = P \} \subset \mathbb{W}
\]
constructed when we flatten $\widetilde{\F}$, see \cite[Theorem 2.1.5]{huybrechts_lehn}.
Now $\WW_0 \subset \mathbb{W}_P$ is the subset given by the condition that $\widetilde{\F}_x$ be semi-stable,
which is an open condition, because $\widetilde{\F}_{| \mathbb{W}_P \times \PP^3}$ is flat over $\mathbb{W}_P$.
We endow $\WW_0$ with the induced reduced structure.
Consider the map
\[
\rho_0 \colon \WW_0 \lra \MM_0, \qquad (\psi, \f) \longmapsto [\Coker(\f)].
\]
On $\WW_0$ we have the canonical action of the linear algebraic group
\[
\GG_0 = \big( \Aut(3\O(-3)) \times \Aut(5\O(-2)) \times \Aut(\O(-1) \oplus \O) \big) / \CC^*
\]
where $\CC^*$ is identified with the subgroup
$
\{ (t \cdot \operatorname{id}, t \cdot \operatorname{id}, t \cdot \operatorname{id}), \ t \in \CC^* \}
$.
It is easy to check that the fibers of $\rho_0$ are precisely the $\GG_0$-orbits.
Let
\[
\WW_1 \subset \Hom (3\O(-3), 5\O(-2) \oplus \O(-1)) \times \Hom (5\O(-2) \oplus \O(-1), 2\O(-1) \oplus \O)
\]
be the locally closed subset of pairs of morphisms equivalent to pairs $(\psi, \f)$ occurring in resolution (\ref{sheaves_in_M_1}) and let
\[
\WW_2 \subset \Hom(\O(-4) \oplus \O(-2), \O(-3) \oplus 3\O(-1)) \times \Hom(\O(-3) \oplus 3\O(-1), 2\O)
\]
be the set of pairs given at \cite[Theorem 6.1(iii)]{choi_chung_maican}.
The groups $\GG_1$, $\GG_2$ are defined by analogy with the definition of $\GG_0$.
As before, for $i = 1, 2$, the fibers of the canonical quotient map $\rho_i \colon \WW_i \to \MM_i$ are precisely the $\GG_i$-orbits.

\begin{proposition}
\label{quotients}
For $i = 0, 1$, $\MM_i$ is the categorical quotient of $\WW_i$ modulo $\GG_i$.
The subvariety $\MM_2$ is the geometric quotient of $\WW_2$ modulo $\GG_2$.
\end{proposition}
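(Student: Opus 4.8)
The plan is to deduce the proposition from the GIT construction of $\MM$ itself, by exhibiting each $\WW_i$ as a frame bundle over the locally closed piece of a Quot scheme that maps onto $\MM_i$. First I would recall that construction: since $4$ and $1$ are coprime, semi-stability coincides with stability for the polynomial $4m+1$, so, fixing $n \gg 0$ and $N = P(n)$, the moduli space is a geometric quotient $\MM = \QQ/PGL(N)$, where $\QQ$ is the locally closed subscheme of $\operatorname{Quot}_{\PP^3}(N\O(-n), P)$ of quotients $[N\O(-n) \twoheadrightarrow \F]$ with $\F$ stable and $\H^0$ of the $n$-twisted quotient map an isomorphism, and where $PGL(N)$ acts freely on $\QQ$ because $\operatorname{Aut}(\F) = \CC^*$; see \cite{huybrechts_lehn}. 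Pulling back the stratification $\MM = \MM_0 \sqcup \MM_1 \sqcup \MM_2$ along $\QQ \to \MM$ gives $PGL(N)$-invariant, saturated, locally closed subschemes $\QQ_i \subset \QQ$, and each $\QQ_i \to \MM_i$ is again a geometric quotient by a free action.

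Next I would construct the comparison. Over $\WW_i \times \PP^3$ lives the universal cokernel $\widetilde{\F}_i$ of the tautological morphism --- the sheaf already used to form the flattening stratification of $\WW_i$ --- flat over $\WW_i$ with fibres the sheaves parametrized by $\MM_i$. Since all sheaves in $\MM_i$ carry a resolution of one of finitely many fixed shapes, a single $n$ suffices to make $\mathcal{V}_i := (p_{\WW_i})_*(\widetilde{\F}_i(n))$ locally free of rank $N$, with $p_{\WW_i}^*\mathcal{V}_i(-n) \twoheadrightarrow \widetilde{\F}_i$; let $\pi_i \colon \BB_i \to \WW_i$ be the associated frame bundle, a principal $GL(N)$-bundle. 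Over $\BB_i$ the evaluation becomes a flat family of quotients $N\O(-n) \twoheadrightarrow \pi_i^*\widetilde{\F}_i$, hence defines a classifying morphism $g_i \colon \BB_i \to \operatorname{Quot}$, which factors through $\QQ_i$ by stability of the fibres and by the cohomology vanishing of Theorem \ref{homological_conditions} (which for $n \gg 0$ forces $\H^0$ of the twist to be an isomorphism). I would then check that $g_i$ is $GL(N)$-equivariant, hence, factoring over the centre, $PGL(N)$-equivariant; that $g_i$ is $\GG_i$-invariant, because a $\GG_i$-conjugate resolution has a canonically isomorphic cokernel and therefore the same image in $\operatorname{Quot}$; and that $g_i$ is surjective onto $\QQ_i$, since every $[N\O(-n) \twoheadrightarrow \F]$ with $\F \in \MM_i$ arises from a resolution of $\F$ and a marking of $\H^0(\F(n))$. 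Descending $g_i$ along the $GL(N)$-torsor $\pi_i$ and composing with $\QQ_i \to \MM_i$ recovers $\rho_i$, so we obtain a commutative square
\[
\xymatrix{
\BB_i \ar[r]^-{g_i} \ar[d]_-{\pi_i} & \QQ_i \ar[d] \\
\WW_i \ar[r]_-{\rho_i} & \MM_i
}
\]
whose columns are the quotient maps for $GL(N)$, respectively $PGL(N)$, and in which $g_i$ carries the $\pi_i$-fibre over a $\rho_i$-fibre (a single $\GG_i$-orbit) onto a single $PGL(N)$-orbit.

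The categorical-quotient property now descends formally. Given a $\GG_i$-invariant morphism $f \colon \WW_i \to Z$, its pullback $f \circ \pi_i$ is invariant under the commuting actions of $GL(N)$ and $\GG_i$ on $\BB_i$; via $g_i$ --- which, by the square above, presents $\MM_i$ at once as $\BB_i$ modulo $GL(N) \times \GG_i$ and as $\QQ_i/PGL(N)$ --- this map descends along the geometric, a fortiori categorical, quotient $\QQ_i \to \MM_i$, yielding a morphism $\MM_i \to Z$ through which $f$ factors; uniqueness is immediate from the surjectivity of $\rho_i$. This proves that $\MM_i$ is a categorical quotient of $\WW_i$ modulo $\GG_i$ for $i = 0, 1$. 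For $i = 2$, by contrast, every sheaf has the rigid resolution of Theorem \ref{homological_conditions}(iii), so the $\GG_2$-orbits in $\WW_2$ are all closed, and the same diagram then shows that $\rho_2$ is a geometric, not merely categorical, quotient.

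I expect the one genuinely non-formal point to be the equivariance and linearization bookkeeping in the second step --- checking that the central $\CC^*$ of $\GG_i$ is matched with the scalars of $GL(N)$ under $g_i$, and that the $\GG_i$-action on $\WW_i$ really lifts to the frame bundle $\BB_i$ with no cohomological obstruction; this is handled as in \cite{drezet_maican}. It is also there that one sees why a geometric quotient cannot be expected for $i = 0, 1$: the dimension of the $\GG_i$-orbits jumps --- on $\WW_0$ between resolutions of type (\ref{sheaves_in_R}) and of type (\ref{sheaves_in_E}), and similarly on a proper subvariety of $\WW_1$ --- so not all $\GG_i$-orbits are closed in $\WW_i$.
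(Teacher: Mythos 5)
Your construction of the frame bundle $\BB_i \to \WW_i$ and the descent through the Quot scheme is, in substance, the argument of \cite[Theorem 3.1.6]{drezet_maican} that the paper simply cites for the categorical-quotient statement; that part of your proposal is sound and is the intended route.

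The genuine gap is in your treatment of the geometric quotient for $i=2$, and in the closing remarks about $i=0,1$. You claim that $\rho_2$ is geometric because ``the $\GG_2$-orbits in $\WW_2$ are all closed,'' and that $\rho_0$, $\rho_1$ fail to be geometric because orbit dimensions jump and hence some orbits are not closed. Both halves of this are off. The paper establishes, just before the proposition, that for every $i$ the fibers of $\rho_i$ are precisely the $\GG_i$-orbits; since fibers of a morphism are closed, all orbits in all three $\WW_i$ are closed, so closedness of orbits can neither distinguish $i=2$ from $i=0,1$ nor, by itself, upgrade a categorical quotient to a geometric one. (A categorical quotient whose fibers are exactly the orbits can still fail to be geometric: one must also know that the target carries the quotient topology and the structure sheaf of invariants.) The ingredient you are missing is normality of the target: $\MM_2$ is smooth, hence normal, and \cite[Theorem 4.2]{popov_vinberg} then says that a categorical quotient with normal base whose fibers are single orbits is geometric. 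No such normality statement is available (or claimed) for $\MM_0$ and $\MM_1$, which is why the proposition asserts only the categorical property there. A minor further slip: the relevant rigid resolution for sheaves in $\MM_2$ is the one used to define $\WW_2$ (morphisms $\O(-4)\oplus\O(-2)\to\O(-3)\oplus 3\O(-1)\to 2\O$), not ``Theorem \ref{homological_conditions}(iii),'' which records only cohomological data.
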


\begin{proof}
The argument at \cite[Theorem 3.1.6]{drezet_maican} shows that $\rho_0$, $\rho_1$, $\rho_2$ are categorical
quotient maps. Since $\MM_2$ is normal (being smooth), we can apply \cite[Theorem 4.2]{popov_vinberg}
to conclude that $\rho_2$ is a geometric quotient map.
\end{proof}

Consider the closed subset $\WW_\el = \rho_0^{-1} (\EE) \subset \WW_0$.
Consider the restriction to the second direct summand of the map
\[
\O_{\WW_{\scriptscriptstyle \operatorname{ell}} \times \PP^3}(-1) \oplus
\O_{\WW_{\scriptscriptstyle \operatorname{ell}} \times \PP^3} \lra
\widetilde{\F}_{| \WW_{\scriptscriptstyle \operatorname{ell}} \times \PP^3}
\]
and denote its image by $\widetilde{\F}'$.
The quotient $[\O_{\WW_{\scriptscriptstyle \operatorname{ell}} \times \PP^3} \twoheadrightarrow \widetilde{\F}']$ induces a morphism
\[
\sigma \colon \WW_\el \lra \Hilb_{\PP^3}(4m).
\]
According to \cite[Examples 2.8 and 4.8]{chen_nollet}, $\Hilb_{\PP^3}(4m)$ has two irreducible components, denoted $H_1$, $H_2$.
The generic member of $H_1$ is a smooth elliptic quartic curve.
The generic member of $H_2$ is the disjoint union of a planar quartic curve and two isolated points.
Note that $H_2$ lies in the closed subset
\[
H = \{ [ \O \twoheadrightarrow \mathcal{S}] \mid \ \h^0(\mathcal{S}) \ge 3 \} \subset \Hilb_{\PP^3}(4m).
\]
Since $\sigma$ factors through the complement of $H$, we deduce that $\sigma$ factors through $H_1$.
By an abuse of notation we denote the corestriction by $\sigma \colon \WW_\el \to H_1$.

\begin{proposition}
\label{E_closure}
The sets $\DD_0$, $\DD_1$, $\DD_2$, $\DD$ and $\EE$ are contained in the closure of $\EE_0$.
The set $\DD$ is irreducible and $\DD_0$ is dense in $\DD$.
Moreover,
\[
\overline{\EE} \setminus \mathbf{P} = \EE \cup \DD = \EE \cup \DD', \qquad
\overline{\RR} \setminus (\overline{\EE} \cup \mathbf{P}) = \RR.
\]
\end{proposition}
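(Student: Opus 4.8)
The plan is first to reduce the two displayed identities to the five inclusions of the opening sentence, then to reduce those inclusions to the two statements $\EE \subset \overline{\EE}_0$ and $\DD_2 \subset \overline{\DD}_0$, and finally to prove these two.

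\emph{From the inclusions to the identities.} From $\MM = \MM_0 \sqcup \MM_1 \sqcup \MM_2$, the inclusion $\MM_2 \subset \mathbf{P}$, Proposition \ref{non-planar_M_1} (the non-planar locus of $\MM_1$ is $\DD'$), and the fact that $\RR$ and $\EE$ consist of non-planar sheaves, the non-planar sheaves of $\MM$ are exactly those in $\RR \cup \EE \cup \DD'$. Moreover $\DD \subset \EE \cup \DD'$: a sheaf $\F \in \DD$ has $\h^0(\F) = 1$, and its section generates a subsheaf which surjects onto $\O_C$ and, by Proposition \ref{non-planar_M_1}, has Hilbert polynomial $3m$ — then $\F \in \DD'$ — or $4m$ — then $\F \in \EE$; hence $\DD \setminus \DD' \subset \EE$ and $\EE \cup \DD = \EE \cup \DD'$. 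Granting $\EE \subset \overline{\EE}_0$ we obtain $\overline{\EE} = \overline{\EE}_0$, a $17$-dimensional irreducible component of $\MM$; since $\RR$ is open in $\MM$ and $\overline{\RR} = \overline{\RR}_0$ has dimension $16$ (the latter being a component and $\overline{\RR}$ irreducible by Proposition \ref{R_irreducible}), a nonempty $\RR \cap \overline{\EE}_0$ would be open and dense in the irreducible $\overline{\EE}_0$ and force $\overline{\EE}_0 \subset \overline{\RR}_0$, which is absurd; thus $\RR \cap \overline{\EE} = \emptyset$. Granting also $\DD \subset \overline{\EE}$, a routine set-theoretic chase — using that every element of $\overline{\EE} \setminus \mathbf{P}$ is non-planar (hence lies in $\RR \cup \EE \cup \DD'$), that $\RR \cap \overline{\EE} = \emptyset$, and that $\EE, \DD \subset \overline{\EE}$ — gives $\overline{\EE} \setminus \mathbf{P} = \EE \cup \DD = \EE \cup \DD'$; and, further using $\RR \cap \mathbf{P} = \emptyset$ and $\EE \cup \DD' \subset \overline{\EE}$, one gets $\overline{\RR} \setminus (\overline{\EE} \cup \mathbf{P}) = \RR$.

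\emph{Reducing the inclusions, and the claim $\DD_2 \subset \overline{\DD}_0$.} By Proposition \ref{D_1_in_D_0} and the already-proved $\DD' \subset \overline{\DD}_0$, the inclusion $\DD \subset \overline{\DD}_0$ will follow once $\DD_2 \subset \overline{\DD}_0$ is established; and once $\DD \subset \overline{\DD}_0$ is known, $\DD$ is irreducible with $\DD_0$ dense in it, since its closure equals the irreducible $\overline{\DD}_0$. Granting $\EE \subset \overline{\EE}_0$, the chain $\DD_0 \setminus \DD' \subset \DD \setminus \DD' \subset \EE$ then shows that the complement in the irreducible variety $\DD_0$ of the proper constructible subset $\DD_0 \cap \DD'$ (of dimension at most $15 < 16 = \dim \DD_0$) lies in $\overline{\EE}_0$, whence $\overline{\DD}_0 \subset \overline{\EE}_0$ and therefore $\DD_0, \DD_1, \DD_2, \DD \subset \overline{\EE}_0$. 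So everything is reduced to $\EE \subset \overline{\EE}_0$ and $\DD_2 \subset \overline{\DD}_0$. For the latter I would exhibit the degeneration directly: a general member of $\DD_0$ is $\O_{D_t}(P_t)$ with $D_t = L_t \cup C_t$, a planar cubic $C_t$ joined transversally to a line $L_t$; let $L_t$ specialize into the plane $H$ of $C_t$ while simultaneously $C_t$ degenerates to $L_0 \cup C'$, with $L_0$ the limit of $L_t$ and $C'$ a conic in $H$. The flat limit $D_0$ of the $D_t$ is then a quartic curve supported on $L_0 \cup C'$ with its extra length-one structure concentrated along $L_0$, i.e.\ exactly the type occurring in $\DD_2$, and $\O_{D_0}(P_0)$ is the corresponding member of $\DD_2$; as $\DD_2$ is irreducible and this sweeps out a dense subset of it, $\DD_2 \subset \overline{\DD}_0$.

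\emph{The main point: $\EE \subset \overline{\EE}_0$.} Let $\F$ give a point of $\EE$, lift it to $\widetilde{\F} \in \WW_\el$, and put $[D] = \sigma(\widetilde{\F}) \in H_1$, so $D$ is the curve $E$ when $\F = \O_E(P)$ and a degenerate quartic (supported on $L \cup C$) when $\F \in \DD$. Using the description of $\Hilb_{\PP^3}(4m)$ and of the component $H_1$ from \cite{vainsencher}, I would choose an irreducible quasi-projective curve $\Gamma \subset H_1$ through $[D]$ all of whose other points represent smooth elliptic quartic curves, and set $\mathbf{W} = \sigma^{-1}(\Gamma)$. The crucial computation is that the fibers of $\sigma$ are irreducible: over a point representing a smooth elliptic quartic $E'$ the fiber is, modulo the $\GG_0$-action, the family $\{\O_{E'}(P') : P' \in E'\}$, hence irreducible of constant dimension, and one must verify that the fiber over $[D]$ does not exceed this dimension; combined with the flatness of $\widetilde{\F}$ over $\mathbb{W}_P$ this forces $\mathbf{W}$ to be irreducible. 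Now $\widetilde{\F} \in \mathbf{W}$, while the open locus of $\mathbf{W}$ over $\Gamma \setminus \{[D]\}$ is nonempty and maps under $\rho_0$ into $\EE_0$, because a smooth elliptic quartic lies on a smooth quadric and $P' \in \reg(E')$, so $\O_{E'}(P') \in \EE_\free \subset \EE_0$. By irreducibility $\widetilde{\F}$ lies in the closure of that locus, hence $\F = \rho_0(\widetilde{\F}) \in \overline{\EE}_0$. The main obstacle is precisely this step: computing the fibers of $\sigma$ and excluding spurious components in the fiber over the singular curve $[D]$, and, intertwined with this, using \cite{vainsencher} to guarantee that a curve $\Gamma$ through $[D]$ with smooth generic member exists at all.
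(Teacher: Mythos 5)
Your set-theoretic reduction of the two displayed identities to the inclusions $\EE \subset \overline{\EE}_0$ and $\DD_2 \subset \overline{\DD}_0$ is sound and agrees with the paper's bookkeeping, and your strategy for $\EE \subset \overline{\EE}_0$ --- a curve $\Gamma \subset H_1$ through the support point $[D]$ whose other points are smooth elliptic quartics, plus irreducibility of $\sigma^{-1}(\Gamma)$ deduced from irreducibility of the fibers --- is exactly the paper's. But the two steps you yourself flag as ``the main obstacle'' are where all the content lies, and as stated they are genuine gaps. For the fiber argument, it is not enough that the special fiber ``not exceed'' the generic fiber dimension: the theorem you need (Shafarevich, Thm.\ 8, p.\ 77) requires \emph{every} fiber to be irreducible of the \emph{same} dimension, and when the support curve is reducible with components $Z_0, \dots, Z_m$ the honest fiber $\sigma^{-1}\{x\} = \rho_0^{-1}\{[\O_E(P)] : P \in E\}$ is a union of $m+1$ pieces and is in general not irreducible. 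The paper's fix is a real idea missing from your sketch: shrink $\WW_\el$ to the preimage of the open set of sheaves $\O_{E'}(P')$ with $P'$ avoiding $Z_1 \cup \dots \cup Z_m$, so that the special fiber becomes irreducible of dimension $1 + \dim \GG_0$, run the argument once per component, and only then conclude $\O_E(P) \in \overline{\EE}_{\reg}$ for all $P$.

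The second gap is $\DD_2 \subset \overline{\DD}_0$. Your one-parameter degeneration (move $L_t$ into the plane $H$ while $C_t$ degenerates to $L_0 \cup C'$) at best identifies a plausible flat limit of the \emph{supports}; it does not identify the limit of the sheaves $[\O_{D_t}(P_t)]$ in $\MM$, which is the whole difficulty --- the limit could a priori be a different extension supported on the limit curve, or could degenerate further (e.g.\ become planar). Asserting that ``$\O_{D_0}(P_0)$ is the corresponding member of $\DD_2$'' is precisely what has to be proved. The paper never takes a limit of a one-parameter family of sheaves: it quotes \cite[Theorem 5.2 (4), (5)]{vainsencher} to get an irreducible closed subset $\hat{\BB} \subset H_1$ whose generic member is a planar cubic union an incident line and which contains the supports of the sheaves in $\DD_2$, chooses $\Gamma \subset \hat{\BB}$ through $x = [\O \twoheadrightarrow \O_D]$, and reruns the fiber-irreducibility argument for $\sigma^o \colon \rho_0^{-1}(\DD \setminus \DD') \to H_1$, checking that both the generic fiber $\rho_0^{-1}\{[\O_{L \cup C}(P)],\ P \in C \setminus L\}$ and the special fiber $\rho_0^{-1}\{[\O_D(P)],\ P \in C' \setminus L'\}$ (for $D$ a conic union a double line) are irreducible of dimension $1 + \dim \GG_0$. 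Without this input from \cite{vainsencher} --- or an equally rigorous identification of the limit sheaf --- your argument for $\DD_2$ does not close. (Your derivation of $\DD_0 \subset \overline{\EE}_0$ from $\EE \subset \overline{\EE}_0$ via the density of $\DD_0 \setminus \DD'$ in $\DD_0$ is a harmless streamlining, but note it merely relocates the work: the $\EE$-inclusion must then be proved for sheaves supported on $L \cup C$, which is the same computation the paper does separately for $\DD_0$.)
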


\begin{proof}
Let $\EE_{\reg} \subset \EE_0$ be the open subset of sheaves with smooth support.
Let $H_{10} \subset H_1$ be the open subset consisting of smooth elliptic quartic curves.
For any $x \in H_1 \setminus H_{10}$ there is an irreducible quasi-projective curve $\Gamma \subset H_1$
such that $x \in \Gamma$ and $\Gamma \setminus \{ x \} \subset H_{10}$.
To produce $\Gamma$ proceed as follows. Embed $H_1$ into a projective space.
Intersect with a suitable linear subspace passing through $x$ to obtain a subscheme of dimension $1$
all of whose irreducible components meet $H_{10}$.
Retain one of these irreducible components and remove the points, other than $x$, that lie outside $H_{10}$

Notice that if $y = [\O \twoheadrightarrow \O_E]$ is a point in $H_{10}$, then $\sigma^{-1}\{ y \}$ is irreducible of dimension $1 + \dim \GG_0$.
Indeed,
\[
\sigma^{-1} \{ y \} = \rho_0^{-1} \{ [\O_E(P)], \ P \in E \}.
\]
Assume now that $x = [\O \twoheadrightarrow \O_E]$ where $E$ is the schematic support of a sheaf in $\EE \setminus \DD$.
We denote its irreducible components by $Z_0, \ldots, Z_m$.
Denote by $(\EE \setminus \DD)^0$ the open subset of sheaves of the form $\O_{E'}(P')$ with $P'$ lying outside $Z_1 \cup \ldots \cup Z_m$
and let $\WW^0$ be its preimage under $\rho_0$.
Denote by $\sigma_0$ the restriction of $\sigma$ to $\WW^0$.
Clearly, $\sigma_0^{-1}\{ y \}$ is irreducible of dimension $1 + \dim \GG_0$ and the same is true for $\sigma_0^{-1}\{ x \}$.
Thus, the fibers of the map $\sigma_0^{-1}(\Gamma) \to \Gamma$ are all irreducible of the same dimension.
By \cite[Theorem 8, page 77]{shafarevich} we deduce that $\sigma_0^{-1}(\Gamma)$ is irreducible.
Thus, $\rho_0(\sigma^{-1}(\Gamma))$ is irreducible, hence any sheaf of the form $\O_E(P)$, $P \in Z_0 \setminus (Z_1 \cup \ldots \cup Z_m)$,
is the limit of sheaves in $\EE_{\reg}$. The same argument applies to $\O_E(P)$ for $P$ belonging to exactly one of the components of $E$.
A fortiori, $\O_E(P)$ lies in the Zariski closure of $\EE_{\reg}$ for all $P \in E$.
We conclude that $\EE \setminus \DD \subset \overline{\EE}_0$.

Let $D$ be the union of a line $L$ and a planar irreducible cubic curve $C$, where $L$ and $C$ meet precisely at a regular point of $C$.
Take $x = [\O \twoheadrightarrow \O_D]$. Then
\[
\sigma^{-1} \{ x \} = \rho_0^{-1} \{ [\O_D(P)],\ P \in C \setminus L \}
\]
is irreducible of dimension $1 + \dim \GG_0$.
We deduce as above that any sheaf of the form $\O_D(P)$, $P \in C \setminus L$, is the limit of sheaves in $\EE_{\reg}$.
The set of sheaves of the form $\O_D(P)$ is dense in $\DD_0$.
We conclude that $\DD_0 \subset \overline{\EE}_0$.

Let $\DD^o \subset \DD \cap \EE = \DD \setminus \DD'$ be the open subset given by the condition that $P \notin L$.
Let $\sigma^o \colon \DD^o \to H_1$ denote the restriction of $\sigma$.
According to \cite[Theorem 5.2 (4)]{vainsencher}, there is an irreducible closed subset $\hat{\BB} \subset H_1$
whose generic member is the union of a planar cubic curve and an incident line.
Let $D$ be the schematic support of a sheaf in $\DD_2$.
According to \cite[Theorem 5.2 (5)]{vainsencher}, the point $x = [\O \twoheadrightarrow \O_D]$ belongs to $\hat{\BB}$.
By the same argument as above, there is an irreducible quasi-projective curve $\Gamma \subset \hat{\BB}$ containing $x$
such that the points $y \in \Gamma \setminus \{ x \}$ are of the form $[\O \twoheadrightarrow \O_{L \cup C}]$,
where $C$ is a planar irreducible cubic curve and $L$ is an incident line.
Notice that
\[
(\sigma^o)^{-1} \{ y \} = \rho_0^{-1} \{ [\O_{L \cup C}(P)], \ P \in C \setminus L \}
\]
is irreducible of dimension $1 + \dim \GG_0$.
Assume, in addition, that $D$ is the union of an irreducible plane conic curve $C'$ and a double line supported on $L'$.
Then
\[
(\sigma^o)^{-1} \{ x \} = \rho_0^{-1} \{ [\O_D(P)], \ P \in C' \setminus L' \}
\]
is irreducible of dimension $1 + \dim \GG_0$.
We deduce, as above, that $(\sigma^o)^{-1} (\Gamma)$ is irreducible, hence $\rho_0((\sigma^o)^{-1} (\Gamma))$ is irreducible,
and hence any sheaf of the form $\O_D(P)$, $P \in C' \setminus L'$, is the limit of sheaves in $\DD_0$.
But $\DD_2$ is irreducible,
hence the set of sheaves $\O_D(P)$ as above is dense in $\DD_2$.
We deduce that $\DD_2 \subset \overline{\DD}_0$. Thus $\DD_2 \subset \overline{\EE}_0$.

Recall from Proposition \ref{D_1_in_D_0} that $\DD_1 \subset \overline{\DD}_0$.
Since $\DD = \DD_0 \cup \DD_1 \cup \DD_2$,
we see that $\DD \subset \overline{\DD}_0 \subset \overline{\EE}_0$.

The inclusion $\overline{\EE} \setminus \mathbf{P} \subset \EE \cup \DD'$
follows from Theorem \ref{homological_conditions} and Proposition \ref{non-planar_M_1}.
Indeed, $\EE$ is closed in $\MM_0$.
The reverse inclusion was proved above.
Finally,
\[
\overline{\RR} \setminus (\overline{\EE} \cup \mathbf{P}) = \overline{\RR} \setminus (\EE \cup \DD' \cup \mathbf{P})
\subset \MM \setminus (\EE \cup \DD' \cup \mathbf{P}) = \MM_0 \setminus \EE = \RR.
\]
The reverse inclusion is obvious because by definition $\RR$ is disjoint from $\EE$, $\DD'$, $\mathbf{P}$.
\end{proof}

\noindent
From Proposition \ref{E_closure} we obtain the decomposition of $\M_{\PP^3}(4m+1)$ into irreducible components.

\begin{theorem}
\label{main_theorem}
The moduli space $\M_{\PP^3}(4m+1)$ consists of three irreducible components $\overline{\RR}$, $\overline{\EE}$ and $\mathbf{P}$.
\end{theorem}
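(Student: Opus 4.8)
The plan is to assemble the three irreducible components directly from the stratification $\MM = \MM_0 \cup \MM_1 \cup \MM_2$ furnished by Theorem \ref{homological_conditions}, together with the identifications of the strata already obtained. Recall that $\MM_0 = \RR \sqcup \EE$ (disjoint union), that $\MM_2 = \mathbf{P}$ is precisely the set of planar sheaves, and that $\MM_1$ decomposes (via Proposition \ref{non-planar_M_1}) into its planar part, which lies in $\mathbf{P}$, and its non-planar part, which is exactly $\DD'$. Hence as a set
\[
\MM = \RR \cup \EE \cup \DD' \cup \mathbf{P}.
\]
Since $\RR$ is irreducible by Proposition \ref{R_irreducible}, $\EE_0$ is irreducible (shown in Section \ref{rational}) and $\mathbf{P}$ is irreducible (it is a bundle over the Grassmannian of planes with fiber $\M_{\PP^2}(4m+1)$, irreducible by Le Potier), it follows that $\MM$ is covered by the three closed irreducible sets $\overline{\RR}$, $\overline{\EE}$, $\mathbf{P}$, using $\EE \cup \DD' \subset \overline{\EE}_0 = \overline{\EE}$ from Proposition \ref{E_closure}. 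So $\MM = \overline{\RR} \cup \overline{\EE} \cup \mathbf{P}$, and it only remains to check that none of these three sets is contained in the union of the other two.

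First I would verify that $\mathbf{P}$ is an irreducible component: this is already contained in the Proposition of Section \ref{rational}, where a dimension count via the Ext-sequence (\ref{ext_sequence_2}) and Serre duality shows $\ext^1_{\O_{\PP^3}}(\F,\F) = 20 = \dim \mathbf{P}$ at a point $\F \in \mathbf{P}_\free$, so $\MM$ is smooth of dimension $20$ there and $\mathbf{P}$ is a component. The same Proposition shows $\overline{\RR}_0$ and $\overline{\EE}_0$ are components, of dimensions $16$ and $17$. It remains to identify $\overline{\RR}_0 = \overline{\RR}$ and $\overline{\EE}_0 = \overline{\EE}$: the inclusions $\RR_0 \subset \RR$ and $\EE_0 \subset \EE_0 \subset \overline{\EE}$ are clear, and conversely one uses that a sheaf in $\RR$ (resp. $\EE$) is, by its resolution, generically the structure sheaf of a rational quartic on a smooth quadric (resp. $\O_E(P)$ with $E$ on a smooth quadric), which places it in $\overline{\RR}_0$ (resp. in $\overline{\EE}_0$ by Proposition \ref{E_closure}). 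Alternatively, one invokes Proposition \ref{E_closure} directly: it gives $\overline{\EE} \setminus \mathbf{P} = \EE \cup \DD'$ and $\overline{\RR} \setminus (\overline{\EE} \cup \mathbf{P}) = \RR$, whence $\RR \not\subset \overline{\EE} \cup \mathbf{P}$, so $\overline{\RR}$ is a genuine component; similarly $\EE \setminus (\overline{\RR} \cup \mathbf{P}) \neq \emptyset$ since a generic $\O_E(P)$ on a smooth quadric is not planar and not generated by a global section, so $\overline{\EE}$ is a component; and $\mathbf{P}$ is a component by the dimension count (its generic point is not in $\overline{\RR} \cup \overline{\EE}$ since those have dimension $< 20$).

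**The main obstacle** is not really in this final synthesis — which is essentially bookkeeping once Propositions \ref{R_irreducible} and \ref{E_closure} and the Proposition of Section \ref{rational} are in hand — but in making sure the set-theoretic cover is exhaustive. The one point to check carefully is that every sheaf in $\MM_1$ is accounted for: by Proposition \ref{non-planar_M_1} a non-planar sheaf with resolution (\ref{sheaves_in_M_1}) lies in $\DD'$, and a planar one lies in $\mathbf{P}$; there is no third possibility because resolution (\ref{sheaves_in_M_1}) characterizes $\MM_1$ by Theorem \ref{homological_conditions}. Once this is in place the theorem follows: $\MM = \overline{\RR} \cup \overline{\EE} \cup \mathbf{P}$ is a cover by three irreducible closed sets, each of which is shown to be a maximal irreducible subset by exhibiting a point (or an open dense subset) not lying in the other two. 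I would close by remarking that this recovers the decomposition of \cite{choi_chung_maican} with the dimensions $16$, $17$, $20$, completing the proof without any appeal to wall-crossing.
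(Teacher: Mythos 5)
Your proposal is correct and follows essentially the same route as the paper, which likewise deduces the theorem by combining the set-theoretic cover $\MM = \RR \cup \EE \cup \DD' \cup \mathbf{P}$ with Proposition \ref{R_irreducible}, Proposition \ref{E_closure} and the dimension counts of the Proposition in Section \ref{rational}. One small correction: $\MM_2$ is not all of $\mathbf{P}$ but only the planar sheaves with $\h^0(\F) = 2$ (planar sheaves also occur in $\MM_1$, as your own treatment of $\MM_1$ implicitly acknowledges); since your argument only uses the inclusion $\MM_2 \subset \mathbf{P}$, this does not affect its validity.
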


\noindent
The intersections $\overline{\RR} \cap \mathbf{P}$, $\overline{\EE} \cap \mathbf{P}$, $\overline{\RR} \cap \overline{\EE}$
were described generically in \cite{choi_chung_maican}.
They are irreducible and have dimension $14$, $16$, respectively, $15$.
The generic member of $\overline{\RR} \cap \mathbf{P}$ has the form $[\O_C(P_1 + P_2 + P_3)]$, where $C$ is a planar quartic curve
and $P_1$, $P_2$, $P_3$ are three distinct nodes.
The generic point in $\overline{\EE} \cap \mathbf{P}$ has the form $[\O_C(P_1 + P_2 + P)]$,
where $C$ is a planar quartic curve, $P_1$ and $P_2$ are distinct nodes and $P$ is a third point on $C$.
The generic sheaves in $\overline{\RR} \cap \overline{\EE}$ have the form $\O_E(P)$, where $E$ is a singular $(2, 2)$-curve
on a smooth quadric surface and $P \in \sing(E)$.

\end{document}